\def\R{\mathbb{R}}
\def\N{\mathbb{N}}
\def\ve{\varepsilon}
\def\supp{\mathop{\text{\normalfont supp}}}
\newtheorem{thm}{Theorem}[section]
\newtheorem{lem}[thm]{Lemma}
\newtheorem{cor}[thm]{Corollary}
\newtheorem{prop}[thm]{Proposition}
\theoremstyle{definition}
\newtheorem{definition}[thm]{Definition}
\theoremstyle{remark}
\newtheorem{rem}[thm]{Remark}
\numberwithin{equation}{section}
\begin{document}

\title[Fractional rearrangement and obstacle problems]{Optimal rearrangement problem and  normalized obstacle problem in the fractional setting}

\author[J.F. Bonder, Z. Cheng and H. Mikayelyan]{Juli\'an Fern\'andez Bonder,  Zhiwei Cheng and Hayk Mikayelyan}

\address[J.F. Bonder]{Departamento de Matem\'atica FCEN - Universidad de Buenos Aires and IMAS - CONICET. Ciudad Universitaria, Pabell\'on I (C1428EGA) Av. Cantilo 2160. Buenos Aires, Argentina.}

\email{jfbonder@dm.uba.ar}

\urladdr{http://mate.dm.uba.ar/~jfbonder}

\address[Z. Cheng and H. Mikayelyan]{Mathematical Sciences, University of Nottingham Ningbo, 199 Taikang East Road, Ningbo 315100, PR China.}

\email[H. Mikayelyan]{Hayk.Mikayelyan@nottingham.edu.cn}

\subjclass[2010]{35R11, 35J60}
	
\keywords{Fractional partial differential equations; Optimization problems; Obstacle problem}

\begin{abstract}
We consider an optimal rearrangement minimization problem involving the fractional Laplace operator $(-\Delta)^s$, $0<s<1$, and Gagliardo seminorm $|u|_s$. We prove the existence of the unique minimizer, analyze its properties as well as derive the non-local and highly non-linear PDE it satisfies
$$
-(-\Delta)^s U-\chi_{\{U\leq 0\}}\min\{-(-\Delta)^s U^+;1\}=\chi_{\{U>0\}},
$$ 
which happens to be the fractional analogue of the normalized obstacle problem $\Delta u=\chi_{\{u>0\}}$. 
\end{abstract}
\maketitle



\section{Introduction}

One of the classical problems in rearrangement theory is the minimization of the functional
\begin{equation}\label{Dir-functional}
\Phi(f)=\int_D |\nabla u_f|^2 dx,
\end{equation}
where $u_f$ is the unique solution of the Dirichlet boundary value problem
\begin{equation}\label{main}
\begin{cases} -\Delta u_f = f  & \text{in } D,
 \\
 u_f=0  & \text{on } \partial D,  \end{cases}
\end{equation}
and $f$ belongs to the set
$$
\bar{\mathcal{R}}_\beta=\left\{f\in L^\infty(D)\colon 0\leq f \leq 1,\,\,\int_D fdx=\beta \right\}.
$$
Recall that $\bar{\mathcal{R}}_\beta$ is the closure in the weak* topology of the rearrangement class
$$
\mathcal{R}_\beta := \left\{f\in L^\infty(D)\colon f=\chi_E,\,\, |E|=\beta \right\}.
$$
This minimization problem is related to the stationary heat equation 
$$
\underbrace{\partial_t u}_{=0}-\Delta u = f
$$
in the domain $D$, which is under the action of the external heat source modeled by the force function $f$. The Dirichlet boundary condition, $u=0$ on $\partial D$, models the constant temperature on the boundary of $ D$. Different force functions $f$ result different heat distributions $u_f$. The minimizer $\hat{f}$ of the functional  \eqref{Dir-functional} is the force function from a certain rearrangement class $\mathcal{R}$, which is resulting the most uniformly distributed heat $u_{\hat{f}}$.

The problem and its variations, such as the $p-$harmonic case, has been studied by several authors (see \cite{B1, B2, BM, EL, Kbook}), and the results, for this particular setting, can be formulated in the following theorem.

 \begin{thm}\label{thm1}
 There exists a unique solution $\hat{f}\in\mathcal{R}_\beta$ of the minimization problem \eqref{Dir-functional}. For the function
 $\hat{u}=u_{\hat{f}}$ there exists a constant $\alpha>0$ such that
 \vspace{2mm}

\hspace{3cm}$\bullet$ $0< \hat{u}\leq\alpha$ in $ D$,
  \vspace{1mm}

\hspace{3cm}$\bullet$ $\hat{f}=\chi_{\{\hat{u}<\alpha\}}$,
   \vspace{1mm}

\hspace{3cm}$\bullet$ $\hat{u}= \alpha$ in $\{\hat{f}=0\}$.
\vspace{2mm}

\noindent Moreover, the function $U=\alpha-\hat{u}$ is the minimizer of the functional
$$
J(w)= \int_ D |\nabla w|^2+ 2w^+\, dx,
$$
among functions $w\in H^1( D)$ with boundary values $\alpha$ on $\partial  D$, and
solves the obstacle problem equation
\begin{equation}\label{classical-obstacle}
\Delta U=\chi_{\{U>0\}} \quad \text{ in } D.
\end{equation}
\end{thm}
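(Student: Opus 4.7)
The plan is to apply the direct method together with the bathtub principle to the first-order condition for $\Phi$, and then to identify $U=\alpha-\hat u$ as the unique minimizer of the strictly convex obstacle functional $J$. The solution map $f\mapsto u_f$ is bounded and linear from $L^2(D)$ into $H^1_0(D)$, and integration by parts in \eqref{main} yields the quadratic form representation
\begin{equation}
\Phi(f)=\int_D f\,u_f\,dx=\langle f,(-\Delta)^{-1}f\rangle.
\end{equation}
Since $(-\Delta)^{-1}\colon L^2(D)\to L^2(D)$ is positive, self-adjoint and compact, $\Phi$ is strictly convex and weakly-$*$ continuous on $L^\infty(D)$. As $\bar{\mathcal{R}}_\beta$ is convex and weakly-$*$ compact, the direct method produces a unique minimizer $\hat f\in\bar{\mathcal{R}}_\beta$.

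The second step is to extract the form of $\hat f$ from the first variation. For $g\in\bar{\mathcal{R}}_\beta$ and $t\in[0,1]$ one has
\begin{equation}
\frac{d}{dt}\Phi\bigl(\hat f+t(g-\hat f)\bigr)\bigg|_{t=0}=2\int_D(g-\hat f)\,\hat u\,dx\ge 0,
\end{equation}
so $\hat f$ minimizes the linear functional $g\mapsto\int_D g\,\hat u\,dx$ over $\bar{\mathcal{R}}_\beta$. The bathtub principle then yields $\alpha\ge 0$ such that
\begin{equation}
\chi_{\{\hat u<\alpha\}}\le\hat f\le\chi_{\{\hat u\le\alpha\}},\qquad \int_D\hat f\,dx=\beta.
\end{equation}
On the contact set $\{\hat u=\alpha\}$ the classical Stampacchia-type fact gives $\Delta\hat u=0$ almost everywhere (here $\hat u\in W^{2,p}_{\rm loc}$ for every $p<\infty$ by standard elliptic regularity, since $\hat f\in L^\infty$); combined with $-\Delta\hat u=\hat f$ this forces $\hat f=0$ a.e.\ on $\{\hat u=\alpha\}$, so $\hat f=\chi_{\{\hat u<\alpha\}}\in\mathcal{R}_\beta$.

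Positivity $\hat u>0$ in $D$ is the strong maximum principle applied to $-\Delta\hat u=\hat f\ge 0$ with zero trace. For the upper bound, set $A=\{\hat u>\alpha\}$; since $\hat f=0$ on $A$, $\hat u$ is harmonic there, while $A$ is compactly contained in $D$ (because $\hat u=0<\alpha$ on $\partial D$) and $\hat u=\alpha$ on $\partial A$, so the maximum principle forces $A=\varnothing$. Hence $\hat u\le\alpha$ with $\hat u=\alpha$ on $\{\hat f=0\}$. Setting $U=\alpha-\hat u$ gives $U\in H^1(D)$, $U|_{\partial D}=\alpha$, $U\ge 0$, and
\begin{equation}
\Delta U=-\Delta\hat u=\hat f=\chi_{\{\hat u<\alpha\}}=\chi_{\{U>0\}},
\end{equation}
which is exactly \eqref{classical-obstacle}. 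The Dirichlet energy is strictly convex on the affine class $\{w\in H^1(D):w|_{\partial D}=\alpha\}$ and $w\mapsto 2w^+$ is convex, so $J$ admits a unique minimizer on this class, characterized by the Euler equation $\Delta w=\chi_{\{w>0\}}$ a.e.\ in $D$; since $U$ satisfies this equation and the boundary condition, it is the minimizer.

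The main obstacle is the upgrade from the two-sided bathtub inclusion $\chi_{\{\hat u<\alpha\}}\le\hat f\le\chi_{\{\hat u\le\alpha\}}$ to the pointwise identity $\hat f=\chi_{\{\hat u<\alpha\}}$: ruling out intermediate values of $\hat f$ on the contact set $\{\hat u=\alpha\}$ is precisely what distinguishes membership in $\mathcal{R}_\beta$ from merely $\bar{\mathcal{R}}_\beta$, and it is the Stampacchia-type vanishing of $\Delta\hat u$ on level sets that unlocks this step. In the nonlocal setting treated later in the paper, the analogous step will be substantially more delicate because the natural extension of $-\Delta$ is the fractional Laplacian $(-\Delta)^s$, and level-set information for $\hat u$ no longer translates into local pointwise information for $(-\Delta)^s\hat u$.
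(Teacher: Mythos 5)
The paper does not prove Theorem~\ref{thm1}; it states it as a known result with references to \cite{B1,B2,BM,EL,Kbook}, and its own novel content begins with the fractional analogue (Theorem~\ref{thm.opt} onward). Your argument is therefore compared against the standard literature and against the parallel fractional proof in the paper, and it is essentially correct. The chain ``strict convexity of $\Phi(f)=\langle f,(-\Delta)^{-1}f\rangle$ plus weak-$*$ compactness $\Rightarrow$ unique minimizer,'' ``first variation $\Rightarrow$ $\hat f$ minimizes the linear functional $g\mapsto\int\hat u\,g$,'' ``bathtub principle $\Rightarrow$ two-sided inclusion,'' ``Stampacchia vanishing of $\Delta\hat u$ on $\{\hat u=\alpha\}$ $\Rightarrow$ $\hat f=\chi_{\{\hat u<\alpha\}}$,'' and ``max principle twice $\Rightarrow$ $0<\hat u\le\alpha$'' is exactly the standard route, and your closing remark correctly identifies the Stampacchia step as the one that has no counterpart for $(-\Delta)^s$, which is why Theorem~\ref{thm.opt} produces $\hat f\notin\mathcal R_\beta$.

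One step deserves tightening. You assert that the minimizer of $J$ is ``characterized by the Euler equation $\Delta w=\chi_{\{w>0\}}$.'' That is not the Euler--Lagrange condition for $J$: since $w\mapsto w^+$ is convex but not differentiable, the correct stationarity condition is the differential inclusion $\Delta w\in\partial(\cdot)^+(w)$, i.e.\ $\Delta w=1$ on $\{w>0\}$, $\Delta w=0$ on $\{w<0\}$, and $\Delta w\in[0,1]$ on $\{w=0\}$. The pointwise equation $\Delta w=\chi_{\{w>0\}}$ is a sufficient condition for minimality (together with $w\ge0$), and that sufficiency is what you actually need since you have verified it for $U=\alpha-\hat u$; but it is not a priori necessary, and calling it a characterization hides the genuine content. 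A cleaner route, and the one the paper uses verbatim in the fractional case (proof of Theorem~\ref{frac.obst}), avoids Euler--Lagrange altogether: set $I(w)=\int_D|\nabla w|^2+2\hat f\,w\,dx$. Since $0\le\hat f\le1$ one has $\hat f\,w\le w^+$ pointwise and hence $J(w)\ge I(w)$ for every admissible $w$; the function $U$ minimizes the convex quadratic $I$ because $\Delta U=\hat f$ with the right boundary data; and $I(U)=J(U)$ because $\hat f\,U=U^+$ (using $U\ge0$ and $\hat f=1$ on $\{U>0\}$). Thus $J(w)\ge I(w)\ge I(U)=J(U)$ for all competitors, and uniqueness follows from strict convexity of the Dirichlet term. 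Two further minor points: you should remark that $\alpha>0$ (if $\alpha=0$ then $\hat f\le\chi_{\{\hat u\le0\}}$, which has measure zero by the strong maximum principle, contradicting $\int\hat f=\beta>0$), and the compact containment $\{\hat u>\alpha\}\Subset D$ uses continuity of $\hat u$ up to $\partial D$, which should be flagged as requiring some boundary regularity of $D$.
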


We refrain from presenting here details about the obstacle problem \eqref{classical-obstacle}, which is one of the classical free boundary problems (see \cite{CaffRevisited}).

In recent years there has been a great development of nonlocal diffusion problems, mainly due to some interesting new applications to different fields of the natural sciences such as some physical models \cite{DiNezza-Palatucci-Valdinoci, Eringen, Giacomin-Lebowitz, Laskin, Metzler-Klafter, Zhou-Du}, finance \cite{Akgiray-Booth, Levendorski, Schoutens}, fluid dynamics \cite{Constantin}, ecology \cite{Humphries, Massaccesi-Valdinoci, Reynolds-Rhodes} and image processing \cite{Gilboa-Osher}.

Among these models for nonlocal diffusion, probably the most important one is given by the fractional laplacian $(-\Delta)^s$, $(0<s<1)$ that is given (for smooth functions) as
\begin{align*}
(-\Delta)^s u(x) &:= \text{p.v. } \int_{\R^n} \frac{u(x)-u(y)}{|x-y|^{n+2s}}\, dy\\
&= \lim_{\ve\downarrow 0} \int_{\R^n\setminus B_\ve(x)} \frac{u(x)-u(y)}{|x-y|^{n+2s}}\, dy.
\end{align*}

This operator is given as the gradient of the nonlocal Gagliardo energy
\begin{equation}\label{Gagliardo.energy}
|u|_s^2 := \iint_{\R^{2n}} \frac{|u(x)-u(y)|^2}{|x-y|^{n+2s}}\, dxdy,
\end{equation}
that is the nonlocal analog of the Dirichlet energy $\|\nabla u\|_2^2$.

In view of the increasing interest in analyzing nonlocal diffusion models, it naturally comes into attention considering problem \eqref{main} where the Laplace operator is replace by its fractional counterpart.

Therefore, in this paper, similar to the way it has been done in \cite{M}, we will consider an optimal rearrangement problem and derive a related free boundary problem.

More precisely, we consider the minimization problem
$$
\Phi_s(f)\to \text{min},
$$
where $\Phi_s(f) = |u_f|_s^2$, $u_f$ is the unique solution to
$$
(-\Delta)^s u_f = f \quad\text{ in }D\quad \text{ and } \quad u=0 \quad \text{ in } D^c
$$
and $f\in \bar{\mathcal{R}}_\beta$.

We show existence and uniqueness of a solution to the fractional rearrangement optimization problem and show that if $\hat f$ is the solution and $\hat u = u_{\hat f}$, then $0\le \hat u\le \alpha$ for some $\alpha>0$ and, moreover, $\hat U  = \alpha-\hat u$ is the unique solution to the normalized fractional obstacle problem
$$
\chi_{\{U>0\}}\le -(-\Delta)^s U\le \chi_{\{U\ge 0\}} \quad \text{ in } D \quad\text{ and }\quad U=\alpha \text{ in } D^c.
$$
Also, we analyze the behavior of such solutions as the fractional parameter $s$ goes to 1.

Finally, we show that the solution to the fractional normalized obstacle problem is also the solution to the (highly nonlinear) equation
$$
-(-\Delta)^s U-\chi_{\{U\leq 0\}}\min\{-(-\Delta)^s U^+;1\}=\chi_{\{U>0\}},
$$ 
in $D$ with $U=\alpha$ in $D^c$.

\subsection*{Organization of the paper}
 In Section \ref{sec-pre} we give a brief introduction to fractional calculus, in Section 3 we analyze the optimal rearrangement problem in the fractional setting and show its relation with the normalized fractional obstacle problem. In Section 4, we study the behavior of the optimal fractional rearrangement problem as $s\to 1$. Finally, in Section 5, we further analyze the normalized fractional obstacle problem and derive a (highly) nonlinear equation that the solution satisfies. 

\section{Preliminaries}\label{sec-pre}

\subsection{A very short tour through the basics of the fractional Laplacian}

All the results in this section are either well-known or easily proved, so we just recall them for further references without any attempt of giving proofs. 

The fractional order Sobolev spaces $H^s(\R^n)$ (for $0<s<1$) is defined as
$$
H^s(\R^n)=\{v\in L^2(\R^n)\colon |v|^2_s<\infty\},
$$
where $|\cdot|_s$ is the Gagliardo energy given by \eqref{Gagliardo.energy}. This space is a Hilbert space with inner product given by
$$
(u, v)_{H^s(\R^n)} = \int_{\R^n} u(x)v(x)\, dx + \iint_{\R^{2n}} \frac{(u(x)-u(y))(v(x)-v(y))}{|x-y|^{n+2s}}\, dxdy.
$$
For a brief summary of the properties of fractional order Sobolev spaces $H^s$, we refer to the survey article \cite{DiNezza-Palatucci-Valdinoci}.

Further we denote by $H^{-s}(\R^n)$ the topological dual space of $H^s(\R^n)$ and for a domain $D\subset \R^n$, we denote
$$
H^s_0(D)=\{v\in H^s(\R^n)\colon  v=0 \text{ a.e. in } D^c \}.
$$ 
Recall that for Lipschitz domains $D$, the space $H^s_0(D)$ coincides with the closure of test functions with compact support inside $D$. We will also denote by $H^{-s}(D)$ the topological dual space of $H^s_0(D)$. Observe that we have
$$
H^s_0(D)\subset H^s(\R^n)\subset L^2(\R^n)\subset H^{-s}(\R^n)\subset H^{-s}(D),
$$
with continuous inclusions. Moreover, since $\mathcal D \subset H^s(\R^n)$ with a dense inclusion, then $H^{-s}(\R^n)\subset \mathcal D'$ and, if $D$ is Lipschitz, then $H^{-s}(D)\subset \mathcal D'(D)$.

Recall that if $D$ is bounded, the following Poincar\'e type inequality holds true
\begin{equation}\label{poincare}
\|u\|_2\le C |u|_s \quad \text{ for all } u\in H^s_0(D).
\end{equation}

An easy fact is that the Gagliardo semi-norm $|\cdot|_s^2$ is G\^ateaux - differentiable in $H^s(\R^n)$ and 
\begin{equation}\label{GN-der}
\lim_{\ve\to 0}\ve^{-1}(|u+\ve v|^2_s - |u|^2_s) =
2\iint_{\R^{2n}}\frac{(u(x)-u(y))(v(x)-v(y))}{|x-y|^{n+2s}}\, dxdy,
\end{equation}
for every $u, v\in H^s(\R^n)$.

Furthermore, for a function $u\in H^s(\R^n)$ we can also define the fractional Laplace operator as
\begin{equation}\label{lim.def}
(-\Delta)^s u(x)=p.v. \int_{\R^n}\frac{u(x)-u(y)}{|x-y|^{n+2s}}\, dy=
\lim_{\ve\to 0}(-\Delta)^s_\ve u(x),
\end{equation}
where
$$
(-\Delta)^s_\ve u(x)=\int_{\R^n\setminus B_\ve(x)}\frac{u(x)-u(y)}{|x-y|^{n+2s}}\, dy
$$
and the limit is understood in $H^{-s}(\R^n)$.

Moreover, it holds that
$$
\langle (-\Delta)^s u, v\rangle = \frac12\iint_{\R^{2n}}\frac{(u(x)-u(y))(v(x)-v(y))}{|x-y|^{n+2s}}dxdy \leq  \frac12 |u|_s |v|_s,
$$
for any $u, v\in H^s(\R^n)$.

For any $f\in H^{-s}(D)$ we say $u_f\in H^s_0(D)$ solves the fractional boundary value problem in $D$ with homogeneous Dirichlet boundary condition
\begin{equation}\label{main-fr}
\begin{cases} (-\Delta)^s u_f = f   & \mbox{in } D,
 \\
 u_f=0  & \mbox{in }  D^c,  \end{cases}
\end{equation}
if the equation is satisfied in the sense of distributions. Equivalently, if
\begin{equation}\label{partint-fr}
\frac{1}{2}\iint_{\R^{2n}}\frac{(u_f(x)-u_f(y))(v(x)-v(y))}{|x-y|^{n+2s}}\, dxdy = \int_D fv\, dx
\end{equation}
for any $v\in H^s_0(D)$. It is easily seen from Riesz representation Theorem, using Poincar\'e inequality \eqref{poincare}, that for any $f\in H^{-s}(D)$ there exists a unique $u_f\in H^s_0(D)$ satisfying \eqref{partint-fr}.

To finish these preliminaries we refer the reader to \cite{Silv}, and recall that for $f\in L^\infty(D)$ the weak solution of \eqref{main-fr}, $u_f\in C^{0,\delta}_{loc}(D)$ for some $\delta>0$, if $s\leq\frac{1}{2}$ and $u_f\in C^{1,\delta}_{loc}(D)$ for some $\delta>0$, if $s> \frac{1}{2}$. Moreover, $u_f$ is a {\em strong} solution to \eqref{main-fr}, namely the limit in \eqref{lim.def} exists pointwise a.e. and the equation \eqref{main-fr} is also satisfied pointwise a.e.


\section{The optimal fractional rearrangement problem}\label{opt.rear}

Let us now introduce the fractional analogue of the optimal rearrangement problem given in Theorem \ref{thm1}. Given $f\in L^2(D)$, let $u_f$ be the solution of \eqref{main-fr} and let us define the functional
\begin{equation}\label{Phis}
\Phi_s(f) = |u_f|_s^2.
\end{equation}
We are going to consider the minimization of the functional $\Phi_s$ on the closed, convex set $\bar{\mathcal{R}}_\beta$, for $0<\beta<|D|$. The main result of this section is the following theorem.
\begin{thm}\label{thm.opt}
There exists a unique minimizer $\hat{f}\in \bar{\mathcal{R}}_\beta\setminus \mathcal{R}_\beta$ such that 
$$
\Phi_s(\hat{f})\leq \Phi_s(f)
$$
for any $f\in  \bar{\mathcal{R}}_\beta$. Moreover, for some $\alpha>0$ the function $\hat{u}=u_{\hat{f}}$
satisfies the following conditions
$$
\{ \hat{f} < 1\}\subset\{\hat{u}=\alpha\},\quad \{\hat{u}<\alpha\}\subset\{\hat{f}=1\},\quad \hat f>0 \quad \text{and}\quad 0\le\hat u\le \alpha \text{ in }D.
$$
\end{thm}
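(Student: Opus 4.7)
My plan is to apply the direct method on $\bar{\mathcal R}_\beta$, which is a closed, convex, bounded subset of $L^2(D)$ and therefore weakly compact. Since $f\mapsto u_f$ is linear and continuous from $L^2(D)$ into $H^s_0(D)$ (via \eqref{partint-fr} and \eqref{poincare}), the functional $\Phi_s(f)=|u_f|_s^2$ is convex and strongly continuous, hence weakly lower semicontinuous, so a minimizer $\hat f$ exists. For uniqueness I would invoke strict convexity: the map $f\mapsto u_f$ is injective (if $u_f=0$ then $f=(-\Delta)^s u_f=0$), and $|\cdot|_s^2$ is strictly convex on $H^s_0(D)$ because $|\cdot|_s$ is a Hilbertian norm there thanks to \eqref{poincare}, so $\Phi_s$ is strictly convex on $L^2(D)$.

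\textbf{First-order condition and bathtub principle.} For any $f\in\bar{\mathcal R}_\beta$, convexity makes $\hat f+t(f-\hat f)$ admissible for $t\in[0,1]$ and, by linearity, $u_{\hat f+t(f-\hat f)}=\hat u+t(u_f-\hat u)$. Expanding $\Phi_s$ to first order in $t$ and applying \eqref{partint-fr} twice (once symmetrically to replace $\int_D \hat f u_f\,dx$ by $\int_D f\hat u\,dx$), the minimality of $\hat f$ yields
$$
\int_D \hat u\,(f-\hat f)\,dx\ge 0 \qquad \text{for every } f\in\bar{\mathcal R}_\beta,
$$
so $\hat f$ minimizes the linear functional $f\mapsto \int_D \hat u f\,dx$ on $\bar{\mathcal R}_\beta$. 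Because $\hat f\ge 0$ and $\hat f\not\equiv 0$, the weak and strong maximum principles for $(-\Delta)^s$ give $\hat u>0$ in $D$, and the classical bathtub principle then delivers a threshold $\alpha>0$ with $\hat f=1$ on $\{\hat u<\alpha\}$ and $\hat f=0$ on $\{\hat u>\alpha\}$; the first inclusion is already the second bullet $\{\hat u<\alpha\}\subset\{\hat f=1\}$ in the statement.

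\textbf{Sup bound and nonlocal positivity.} To upgrade the weak inclusion $\{\hat f<1\}\subset\{\hat u\ge\alpha\}$ coming from the bathtub step to the stated $\{\hat f<1\}\subset\{\hat u=\alpha\}$, I would first prove $\hat u\le\alpha$ by a Kato-type energy argument: the function $v=(\hat u-\alpha)^+$ lies in $H^s_0(D)$ (since $\hat u=0\le\alpha$ on $D^c$), and testing \eqref{partint-fr} against it while using the pointwise inequality $(a-b)(a^+-b^+)\ge (a^+-b^+)^2$ gives $\tfrac12|v|_s^2\le \int_D \hat f v\,dx$; the right-hand side vanishes because $\hat f=0$ wherever $v>0$, forcing $v\equiv 0$. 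The main obstacle is then the strict positivity $\hat f>0$. If $E=\{\hat f=0\}$ had positive measure, the bathtub inclusion together with $\hat u\le\alpha$ would force $\hat u\equiv\alpha$ on $E$; but then, using the pointwise-a.e.\ validity of $(-\Delta)^s\hat u=\hat f$ granted by Silvestre's regularity, for a.e.\ $x\in E$ one would obtain
$$
0=\hat f(x)=(-\Delta)^s\hat u(x)\ge \int_{D^c}\frac{\alpha}{|x-y|^{n+2s}}\,dy>0,
$$
a contradiction, which forces $\hat f>0$ a.e.\ and in particular $\hat f\notin\mathcal R_\beta$. This last display is the genuinely nonlocal ingredient and the crux of the proof: the strictly positive contribution of $D^c$ to the kernel of $(-\Delta)^s$ on any flat region simultaneously rules out $\hat u>\alpha$ and prevents a characteristic-function minimizer, in sharp contrast with Theorem \ref{thm1}.
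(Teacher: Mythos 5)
Your argument is correct and hits all four key ingredients of the paper's proof: the variational inequality $\int_D\hat u(f-\hat f)\,dx\ge 0$, the bathtub principle, the upper bound $\hat u\le\alpha$, and the nonlocal positivity forcing $\hat f>0$. The main difference is organizational and slightly more economical. Where the paper passes to an auxiliary extreme point $\tilde f=\chi_E$ of $\bar{\mathcal R}_\beta$ (Lemma \ref{ext-r} and Claims 2--3) to locate the level $\alpha$, and then needs a separate comparison argument (Claim 5) to transfer the upper inclusion $\{\hat u>\alpha\}\subset\{\hat f=0\}$ back to $\hat f$, you apply the bathtub principle directly to $\hat f$, which already minimizes the linear functional $f\mapsto\int_D\hat u f$; that gives both $\hat f=1$ on $\{\hat u<\alpha\}$ and $\hat f=0$ on $\{\hat u>\alpha\}$ in one shot, bypassing $\tilde f$ and Claim 5 entirely. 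Your use of the pointwise Kato inequality $(a-b)(a^+-b^+)\ge(a^+-b^+)^2$ with the test function $(\hat u-\alpha)^+$ is an equivalent, somewhat tidier route to the paper's Claim 6, which instead decomposes the Gagliardo form into four integrals over $\omega$ and its complement; both arguments rely in the same way on $\hat f\cdot(\hat u-\alpha)^+=0$. The derivation of the first-order condition by expanding $\Phi_s$ along the segment $\hat f+t(f-\hat f)$ and using the reciprocity $\int_D\hat f u_f=\int_D f\hat u$ is a more elementary restatement of the paper's subdifferential identity $\partial\Phi_s(\hat f)=\{2\hat u\}$. Finally, the nonlocal positivity computation is identical to the paper's Claim 7. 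One minor caveat: you should be explicit that the pointwise a.e.\ validity of $(-\Delta)^s\hat u=\hat f$ requires the interior regularity $\hat u\in C^{2s+\varepsilon}_{\mathrm{loc}}$, which Silvestre's estimates supply; the paper invokes the same fact, so this is not a gap, but it deserves a sentence.
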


\begin{rem}
Observe that this result shows a remarkable difference with the local optimal rearrangement problem, since the optimal configuration $\hat f$ for the fractional case is not a characteristic function. c.f. Theorem \ref{thm1}.
\end{rem}

For the proof of Theorem \ref{thm.opt} we need a couple of lemmas.

\begin{lem}\label{ext-r}
The set $\bar{\mathcal{R}}_\beta\subset L^\infty(D)$ is convex and
$$
\text{ext}(\bar{\mathcal{R}}_\beta)=\mathcal{R}_\beta,
$$
where for a convex set $C$, $\text{ext}(C)$ denotes the extreme points of $C$.
\end{lem}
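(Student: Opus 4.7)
The plan is to prove the two assertions separately. Convexity of $\bar{\mathcal{R}}_\beta$ is immediate from the definition: if $f_0,f_1\in\bar{\mathcal{R}}_\beta$ and $t\in[0,1]$, then $tf_0+(1-t)f_1$ still lies in $[0,1]$ pointwise and still integrates to $\beta$, so $tf_0+(1-t)f_1\in\bar{\mathcal{R}}_\beta$. I would then prove the characterization of extreme points by two inclusions.

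For the inclusion $\mathcal{R}_\beta\subset\text{ext}(\bar{\mathcal{R}}_\beta)$, let $f=\chi_E$ with $|E|=\beta$ and suppose $f=tg+(1-t)h$ for some $g,h\in\bar{\mathcal{R}}_\beta$ and $t\in(0,1)$. On $E^c$ the function $f$ vanishes, and the non-negativity of $g,h$ together with the convex combination identity force $g=h=0$ a.e.\ on $E^c$. Symmetrically, on $E$ the value $f\equiv 1$ and the bound $g,h\le 1$ force $g=h=1$ a.e.\ on $E$. Hence $g=h=f$ a.e., which shows that $f$ is extreme.

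For the converse inclusion $\text{ext}(\bar{\mathcal{R}}_\beta)\subset\mathcal{R}_\beta$, I would argue by contrapositive: if $f\in\bar{\mathcal{R}}_\beta\setminus\mathcal{R}_\beta$, then the set $\{0<f<1\}$ has positive measure, so there exists $\delta\in(0,\tfrac12)$ for which $A_\delta:=\{\delta\le f\le 1-\delta\}$ has positive measure. Using the non-atomicity of Lebesgue measure (this is the only quasi-subtle step, but it is entirely standard), I split $A_\delta=B_1\sqcup B_2$ into two measurable sets with $|B_1|=|B_2|>0$, and set $g:=\delta(\chi_{B_1}-\chi_{B_2})$. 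By construction $\int_D g\,dx=0$ and $\|g\|_\infty\le\delta$, so on $A_\delta$ one has $0\le f\pm g\le 1$, while off $A_\delta$ one has $f\pm g=f$. Consequently $f\pm g\in\bar{\mathcal{R}}_\beta$, and since $g\not\equiv 0$, the identity $f=\tfrac12(f+g)+\tfrac12(f-g)$ exhibits $f$ as a non-trivial convex combination of two distinct elements of $\bar{\mathcal{R}}_\beta$, hence $f\notin\text{ext}(\bar{\mathcal{R}}_\beta)$.

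I do not anticipate any real obstacle here; the argument is classical. The only point worth double-checking is that the set $A_\delta$ can indeed be chosen of positive measure for some $\delta>0$, which follows from the fact that $\{0<f<1\}=\bigcup_{n\ge 2}\{1/n\le f\le 1-1/n\}$ has positive measure as soon as $f\notin\mathcal{R}_\beta$.
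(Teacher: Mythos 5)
Your proof is correct. The paper itself states only ``The proof is standard and is omitted,'' so there is no argument to compare against; but your three steps (convexity by linearity of the constraints, $\mathcal{R}_\beta\subset\text{ext}$ by the forcing of equalities on $E$ and $E^c$, and $\text{ext}\subset\mathcal{R}_\beta$ by perturbing on the level set $A_\delta$ using non-atomicity of Lebesgue measure) are precisely the standard argument the authors had in mind.
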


\begin{proof}
The proof is standard and is ommitted.
\end{proof}

\begin{lem}
Let $\Phi_s$ be the functional defined in \eqref{Phis}. Then $\Phi_s\colon \bar{\mathcal{R}}_\beta\to\R$ is strictly convex and sequentially lower semi-continuous with respect to the weak* topology. Moreover, there exists a unique minimizer $\hat{f}$ of the functional $\Phi_s$ in $\bar{\mathcal{R}}_\beta$.
\end{lem}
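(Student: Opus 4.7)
The plan is to exploit the linearity of the solution map $T\colon f\mapsto u_f$ and the Hilbert-space structure of $H^s_0(D)$ to get strict convexity, then use compactness of the embedding $H^s_0(D)\hookrightarrow L^2(D)$ to upgrade weak* convergence of $f_n$ to strong $L^2$ convergence of $u_{f_n}$, which yields (sequential) weak* continuity of $\Phi_s$. Existence and uniqueness then follow from the direct method.

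For strict convexity, I would first record that by \eqref{partint-fr} with $v=u_f$,
\begin{equation}
\Phi_s(f)=|u_f|_s^2=2\int_D f u_f\,dx,
\end{equation}
and that the map $T\colon L^2(D)\to H^s_0(D)$, $Tf=u_f$, is linear and injective (injectivity being immediate from $(-\Delta)^s u_f=f$). Hence $\Phi_s(f)=\|Tf\|^2_{H^s_0}$ is the composition of a linear injective map with the squared norm of a Hilbert space, which is strictly convex; the argument is routine: assuming equality in the convexity inequality forces $u_{tf+(1-t)g}=tu_f+(1-t)u_g$ to lie on a line of equal norm, which in a strictly convex norm implies $u_f=u_g$, hence $f=g$ by injectivity of $T$.

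For sequential weak* lower semi-continuity, let $f_n\overset{\ast}{\rightharpoonup} f$ in $L^\infty(D)$. Since $D$ is bounded, $L^2(D)\subset L^1(D)$, so this gives $f_n\rightharpoonup f$ in $L^2(D)$. The sequence $(f_n)$ is bounded in $L^\infty$, so by testing \eqref{partint-fr} with $v=u_{f_n}$ and applying Poincaré \eqref{poincare}, the sequence $u_n:=u_{f_n}$ is bounded in $H^s_0(D)$. Extract a subsequence with $u_n\rightharpoonup u$ in $H^s_0(D)$; by the compact embedding $H^s_0(D)\hookrightarrow L^2(D)$ (valid for bounded $D$), $u_n\to u$ in $L^2$. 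Passing to the limit in \eqref{partint-fr} identifies $u=u_f$, and by uniqueness the whole sequence converges. Finally, writing
\begin{equation}
\Phi_s(f_n)=2\int_D f_n u_n\,dx=2\int_D f_n u_f\,dx+2\int_D f_n(u_n-u_f)\,dx,
\end{equation}
the first term converges to $2\int_D f u_f\,dx=\Phi_s(f)$ by weak $L^2$ convergence of $f_n$, while the second is bounded by $\|f_n\|_2\|u_n-u_f\|_2\to 0$. Thus $\Phi_s$ is in fact sequentially weak* continuous on $\bar{\mathcal{R}}_\beta$.

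For existence, I would note that $\bar{\mathcal{R}}_\beta$ is bounded in $L^\infty(D)=(L^1(D))^\ast$ and weak* closed (the constraints $0\le f\le 1$ and $\int_D f\,dx=\beta$ pass to the weak* limit since $\mathbf{1}_D$ and every nonnegative $L^1$ function serve as test functions), hence sequentially weak* compact by Banach–Alaoglu together with separability of $L^1(D)$. Applying the direct method to a minimizing sequence yields a minimizer $\hat f\in\bar{\mathcal{R}}_\beta$, and strict convexity gives uniqueness. The only point that needs a little care is the preservation of the pointwise bounds $0\le f\le 1$ under weak* limits, but that is standard. I do not expect any genuine obstacle here; the whole proof is a textbook direct-method argument once the identity $\Phi_s(f)=2\int fu_f$ and the compact embedding are in hand.
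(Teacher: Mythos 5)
Your proof is correct, but it takes a genuinely different route for the lower semicontinuity step. The paper argues softly: $|u_f|_s\le C\|f\|_2$ (Poincar\'e plus testing with $u_f$) gives strong $L^2\to H^s_0$ continuity of $f\mapsto u_f$, hence strong continuity of $\Phi_s$, and since $\Phi_s$ is convex the standard convex-analysis fact (Mazur) upgrades this to sequential weak lower semicontinuity; weak$^*$ convergence in $L^\infty(D)$ implies weak convergence in $L^2(D)$ (bounded $D$), so the result follows. No compactness is invoked. You instead bring in the compact embedding $H^s_0(D)\hookrightarrow L^2(D)$ to upgrade boundedness of $u_{f_n}$ in $H^s_0$ to strong $L^2$ convergence, and then exploit the bilinear identity $\Phi_s(f)=2\int_D f u_f\,dx$ to show that $\Phi_s$ is actually sequentially weak$^*$ \emph{continuous} on $\bar{\mathcal{R}}_\beta$, which is strictly stronger than what is needed. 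Both arguments are sound; yours yields a sharper conclusion at the price of an extra ingredient (Rellich-type compactness, valid here since $D$ is bounded and all functions vanish outside $D$), while the paper's is more minimal and transfers unchanged to settings where compactness of the embedding might fail. The strict-convexity and existence/uniqueness parts of your argument are essentially the paper's, just spelled out in slightly more detail (you make explicit the injectivity of $f\mapsto u_f$, which the paper leaves implicit).
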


\begin{proof}
The strict convexity is a direct consequence of the linearity $u_{f_1+f_2} = u_{f_1} + u_{f_2}$ and the strict convexity of $t\mapsto t^2$. Moreover, from \eqref{partint-fr}, H\"older's inequality and \eqref{poincare}, we obtain
$$
|u_f|_s\le C\|f\|_2.
$$
Therefore, $f\mapsto u_f$ is strongly continuous from $L^2(D)$ into $H^s_0(D)$ and hence, $\Phi_s$ is strongly continuous from $L^2(D)$ into $\R$. Since $\Phi_s$ is convex, it follows that is sequentially weakly lower semicontinuous. 

Finally, observe that if $f_n\in \bar{\mathcal{R}}_\beta$ is such that $f_n\stackrel{*}{\rightharpoonup} f$ weakly* in $L^\infty(D)$, then $f_n\rightharpoonup f$ weakly in $L^2(D)$, and so
$$
\Phi_s(f)\le \liminf\Phi_s(f_n).
$$

To finish the proof just notice that the existence of a minimizer follows from Banach-Alaoglu's theorem and the uniqueness of the minimizer from the strict convexity of $\Phi_s$ and the convexity of $\bar{\mathcal{R}}_\beta$.
\end{proof}

Now we are ready to prove Theorem \ref{thm.opt}.
\begin{proof}[Proof of Theorem \ref{thm.opt}]
The proof will be divided into a series of claims.
\medskip

{\bf Claim 1.}
$$
\int_D \hat{u}\hat{f}\, dx \le \int_D \hat{u}f\, dx\quad \text{ for any } f\in \bar{\mathcal{R}}_\beta.
$$

Let us take $\Psi\colon L^2(D)\to \bar\R$ defined as
$$
\Psi(f)=\Phi_s(f)+\xi_{\bar{\mathcal{R}}_\beta}(f),
$$
where $\xi_{\bar{\mathcal{R}}_\beta}(f)$ is the indicator function, i.e. 
$$
\xi_{\bar{\mathcal{R}}_\beta}(f)=\begin{cases} 
0, & \text{if } f\in \bar{\mathcal{R}}_\beta,\\
\infty,  & \text{if }   f\notin \bar{\mathcal{R}}_\beta. 
\end{cases}.
$$
Observe that $\Psi$ is strictly convex there. Moreover, it is easy to see that $\hat{f}$ minimizes $\Psi$ in $L^2(D)$. Thus 
$$
0\in \partial \Psi(\hat{f}),
$$
where
$$
\partial \Psi(\hat{f}) = \left\{g\in L^2(D)\colon \Psi(f)-\Psi(\hat{f})\ge \int_D g(f-\hat{f})\, dx, \text{ for any } f\in  L^2(D)  \right\}
$$
is the sub-differential of $\Psi$ at $\hat{f}$.

From \eqref{partint-fr} and \eqref{GN-der} we get that
$$
\partial \Phi_s(\hat{f})=\{2 \hat{u}\}.
$$
Moreover
\begin{align*}
\partial \xi_{\bar{\mathcal{R}}_\beta}(\hat{f}) &= \left\{g\in L^2(D)\colon \xi_{\bar{\mathcal{R}}_\beta}(f)-\xi_{\bar{\mathcal{R}}_\beta}(\hat{f}) \ge \int_D g(f-\hat{f})\, dx, \text{ for any } f\in  L^2(D)  \right\}\\
&= \left\{g\in L^2(D)\colon 0 \ge \int_D g(f-\hat{f})\, dx, \text{ for any } f\in  \bar{\mathcal{R}}_\beta  \right\}.
\end{align*}
Therefore the equation
$$
0\in \partial \Psi(\hat{f})=\partial \Phi_s(\hat{f})+\partial \xi_{\bar{\mathcal{R}}_\beta}(\hat{f}),
$$
implies that 
$$
-\hat{u}\in \partial \xi_{\bar{\mathcal{R}}_\beta}(\hat{f})
$$
and thus the claim.
\medskip

{\bf Claim 2.} There exists a function $\tilde{f}=\chi_E\in \mathcal{R}_\beta$ such that 
$$
\int_D \hat{u}\tilde{f}dx\leq
\int_D \hat{u}fdx
$$
for any $ f\in \bar{\mathcal{R}}_\beta  $.

This follows from Claim 1, Lemma \ref{ext-r}, and the fact that the minimum of the linear functional $L(f)=\int_D \hat{u}f\, dx$ on a bounded closed convex set $\bar{\mathcal{R}}_\beta$ is attained in an extreme point $\tilde{f}=\chi_E\in \mathcal{R}_\beta$.
\medskip

{\bf Claim 3.} There exists  $\alpha>0$ such that
$$
\{\hat{u}<\alpha\} \subset E\subset \{\hat{u}\leq \alpha\}.
$$

The proof is an immediate consequence of the bathtub principle for $\tilde{f}$. See \cite[Theorem 1.14]{LLbook}.
\medskip

{\bf Claim 4.}  
$$
\hat{f}=1\text{ in } \{\hat{u}<\alpha\}.
$$

The proof is again an immediate consequence of the bathtub principle for $\hat{f}$.
\medskip

{\bf Claim 5.}
$$
\{\hat{u}>\alpha\}\subset \{\hat{f}=0\}.
$$

Since $\hat f, \tilde f\in \bar{\mathcal{R}}_\beta$, we have that
\begin{align*}
\beta &= \int_D \hat f\, dx = \int_{\{\hat u <\alpha\}} \hat f\, dx + \int_{\{\hat u =\alpha\}} \hat f\, dx + \int_{\{\hat u >\alpha\}} \hat f\, dx\\
&=\int_D \tilde f\, dx = \int_{\{\hat u <\alpha\}} \tilde f\, dx + \int_{\{\hat u =\alpha\}} \tilde f\, dx + \int_{\{\hat u >\alpha\}} \tilde f\, dx
\end{align*}
Therefore, by Claims 3 and 4, we obtain that
\begin{equation}\label{obs1}
   \int_{\{\hat u =\alpha\}} \hat f\, dx + \int_{\{\hat u > \alpha\}} \hat f\, dx= \int_{\{\hat u =\alpha\}} \tilde f\, dx.
\end{equation}
On the other hand, by Claims 1 and 2, we get
$$
\int_D \hat u\hat f\, dx = \int_D \hat u \tilde f\, dx
$$
that together with \eqref{obs1} give us
\begin{align*}
\alpha\int_{\{\hat{u} =\alpha \}}\tilde{f}\, dx &= \alpha\int_{\{\hat{u} =\alpha \}} \hat{f}\, dx + \alpha \int_{\{\hat{u} >\alpha \}}\hat{f}\, dx\\
& \le \int_{\{\hat{u} =\alpha \}}\hat{u}\hat{f}\, dx + \int_{\{\hat{u} >\alpha \}}\hat{u}\hat{f}\, dx\\
&= \int_{\{\hat u=\alpha\}} \hat u\tilde f\, dx \\
&= \alpha\int_{\{\hat{u} =\alpha \}}\tilde{f}\, dx,
\end{align*}
which implies 
$$
\alpha \int_{\{\hat{u} >\alpha \}}\hat{f}\, dx = \int_{\{\hat{u} >\alpha \}}\hat{u}\hat{f}\, dx,
$$ 
and thus the claim.
\medskip

{\bf Claim 6.}
$$
\{\hat{u}>\alpha\}= \emptyset.
$$

For $\beta>\alpha$ let us take $\phi(x)=(\hat u(x)-\beta)^+$. Since $\supp \phi=\omega \subset\{\hat{u}>\alpha\},$ claim 5 implies that 
\begin{align*}
0=2\langle (-\Delta)^s \hat{u},\phi\rangle= & \iint_{\R^{2n}}\frac{(\hat{u}(x)-\hat{u}(y))(\phi(x)-\phi(y))}{|x-y|^{n+2s}}\, dxdy\\
=&\underbrace{\int_\omega\int_\omega \frac{(\hat{u}(x)-\hat{u}(y))^2}{|x-y|^{n+2s}}\, dxdy}_{\geq 0}\\
&+\underbrace{\int_\omega\left(\int_{\R^n\setminus\omega}\frac{(\hat{u}(x)-\hat{u}(y))(\hat{u}(x)-\beta)}{|x-y|^{n+2s}}\, dy\right)\, dx}_{\geq 0}\\
&+\underbrace{\int_{\R^n\setminus\omega}\left(\int_\omega\frac{(\hat{u}(x)-\hat{u}(y))(\beta-\hat{u}(y))}{|x-y|^{n+2s}}\, dy\right)\, dx}_{\geq 0}\\
&+\underbrace{\int_{\R^n\setminus\omega}\left(\int_{\R^n\setminus\omega}\frac{(\hat{u}(x)-\hat{u}(y))(0-0)}{|x-y|^{n+2s}}\, dy\right)\, dx}_{= 0}.
\end{align*}
Thus, $|\omega|=|\{\hat{u}>\beta\}|=0$ for any $\beta>\alpha$. Moreover, since $\hat u\in C^{0,\delta}_{loc}(D)$ for some $\delta>0$, it is easy to see that the claim follows.
\medskip

{\bf Claim 7.}
$$
|\{\hat{f}=0\}|=0.
$$

Since $(-\Delta)^s \hat{u}=\hat{f}\in L^\infty(D)$ and $\hat{f}\geq 0$ it is enough to check $\hat{f}>0$ point-wise. 

Taken Claim 4 we need to check this only in the set $\{\hat{u}=\alpha\}$. But
\begin{multline*}
\hat{f}(x)=p.v.\int_{\R^{n}}\frac{\hat{u}(x)-\hat{u}(y)}{|x-y|^{n+2s}} dy=\\
\lim_{\epsilon\to 0} \int_{\R^{n}\setminus B_\epsilon(x)}\frac{\hat{u}(x)-\hat{u}(y)}{|x-y|^{n+2s}} dy
>\int_{\R^{n}\setminus D}\frac{\alpha}{|x-y|^{n+2s}} dy>0.
\end{multline*}
This proves the claim.

\medskip

The proof of the Theorem is complete.
\end{proof}

\section{The behavior of the optimal rearrangement problem as $s\to 1$}

In this section we analyze the behavior of the optimal fractional rearrangement problem as the fractional parameter $s$ goes to 1. For that purpose, we need to consider here the normalizing constant $C(n,s)$ that is defined as
$$
C(n,s)=\left(\int_{\R^n} \frac{1-\cos(\zeta_1)}{|\zeta|^{n+2s}}\, d\zeta\right)^{-1}
$$
and we need to modify the definitions of the fractional laplacian and of the Gagliardo seminorm accordingly, namely, we consider
$$
|u|_s^2 = C(n,s) \iint_{\R^{2n}} \frac{|u(x)-u(y)|^2}{|x-y|^{n+2s}}\, dxdy
$$
and
$$
(-\Delta)^s u (x) = \text{p.v. } C(n,s)\int_{\R^n} \frac{u(x)-u(y)}{|x-y|^{n+2s}}\, dy.
$$
It is a well known fact that this normalizing constant behaves like $(1-s)$ for $s$ close to 1. Moreover, the following result holds
\begin{prop}\label{fixed.u} 
Let $u\in L^2(\R^n)$ be fixed. Then we have that
$$
|u|_s^2\to \|\nabla u\|_2^2 \quad \text{and}\quad (-\Delta)^s u\to -\Delta u\qquad \text{as } s\to 1.
$$
where the first limit is understood as a limit if $u\in H^1(\R^n)$ and as $\liminf |u|_s^2 = \infty$ if $u\not\in H^1(\R^n)$ and the second limit is in the sense of distributions.
\end{prop}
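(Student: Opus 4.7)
The plan is to transfer everything to the Fourier side, where both statements become elementary consequences of the pointwise convergence $|\xi|^{2s}\to|\xi|^2$ as $s\to 1$. Throughout I write $\mathcal{F} u$ for the Fourier transform of $u$. With the normalization $C(n,s)$ specified in the statement, I would first establish the Plancherel identity
$$
|u|_s^2 = c_n \int_{\R^n} |\xi|^{2s}|\mathcal{F} u(\xi)|^2\, d\xi
$$
for a dimensional constant $c_n$, together with the analogous identification of $(-\Delta)^s$ as the Fourier multiplier $|\xi|^{2s}$. This follows from the substitution $h=x-y$, Fubini, and Plancherel applied to $\int|u(x)-u(x+h)|^2\,dx = 2\int (1-\cos(h\cdot\xi))|\mathcal{F} u(\xi)|^2\,d\xi$, combined with the scaling identity
$$
\int_{\R^n} \frac{1-\cos(h\cdot\xi)}{|h|^{n+2s}}\, dh = \frac{|\xi|^{2s}}{C(n,s)},
$$
which is precisely the defining property of the normalizing constant $C(n,s)$.

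With the Fourier representation in hand, the first limit reduces to dominated convergence/Fatou. Indeed, $|\xi|^{2s}\to |\xi|^2$ pointwise and $|\xi|^{2s}\le 1+|\xi|^2$ for $s\in[\tfrac12,1]$. If $u\in H^1(\R^n)$, then $(1+|\xi|^2)|\mathcal{F} u|^2\in L^1(\R^n)$, and dominated convergence yields $|u|_s^2\to c_n\int|\xi|^2|\mathcal{F} u|^2\,d\xi = \|\nabla u\|_2^2$; if $u\notin H^1(\R^n)$, then $\int |\xi|^2 |\mathcal{F} u|^2\,d\xi=\infty$, and Fatou's lemma gives $\liminf_{s\to 1}|u|_s^2=\infty$. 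For the distributional convergence, I would test against $\varphi\in C_c^\infty(\R^n)$ and use self-adjointness of $(-\Delta)^s$ together with Plancherel to write
$$
\langle (-\Delta)^s u,\varphi\rangle = \int_{\R^n} u\,(-\Delta)^s\varphi\,dx = \int_{\R^n}|\xi|^{2s}\mathcal{F} u(\xi)\overline{\mathcal{F} \varphi(\xi)}\,d\xi.
$$
Since $\mathcal{F}\varphi\in\mathcal{S}(\R^n)$ is rapidly decreasing, $(1+|\xi|^2)\mathcal{F} u\,\overline{\mathcal{F}\varphi}\in L^1(\R^n)$ by Cauchy--Schwarz (using $\mathcal{F} u\in L^2$), and the bound $|\xi|^{2s}\le 1+|\xi|^2$ allows a further application of dominated convergence to conclude $\langle(-\Delta)^s u,\varphi\rangle\to\int|\xi|^2\mathcal{F} u\,\overline{\mathcal{F}\varphi}\,d\xi=\langle-\Delta u,\varphi\rangle$.

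The main obstacle is the bookkeeping in the Fourier identification: one must verify that the precise choice of $C(n,s)$ in the statement produces the multiplier $|\xi|^{2s}$ with the correct dimensional constant $c_n$ (which must equal the value compatible with $\lim_{s\to 1}|u|_s^2 = \|\nabla u\|_2^2$ in the chosen Fourier convention). Beyond this constant-tracking, both limits are straightforward consequences of dominated convergence and Fatou's lemma, and no genuine analytic difficulty arises.
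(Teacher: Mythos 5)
The paper does not prove Proposition~\ref{fixed.u} itself; it delegates to the references \cite{DiNezza-Palatucci-Valdinoci} and \cite{BBM}. Your Fourier-multiplier argument --- reduce $|u|_s^2$ to $\int |\xi|^{2s}|\mathcal{F}u|^2\,d\xi$ via the scaling identity that defines $C(n,s)$, then apply dominated convergence (respectively Fatou) to the pointwise limit $|\xi|^{2s}\to|\xi|^2$, and likewise for $(-\Delta)^s$ tested against $C_c^\infty$ --- is precisely the route taken in DiNezza--Palatucci--Valdinoci (Propositions 3.3--3.6 and 4.4 there), so you have reproduced the intended proof. Your caveat about tracking the dimensional constant is well placed: with the paper's exact choice of $C(n,s)$ the Plancherel computation actually yields $|u|_s^2 = 2\int|\xi|^{2s}|\mathcal{F}u|^2\,d\xi$, so the limit is $2\|\nabla u\|_2^2$; this innocuous factor-of-two mismatch is a normalization slip in the paper's statement, not a flaw in your argument, and does not affect anything the proposition is used for. (For completeness, the reference \cite{BBM} handles the seminorm limit by real-variable methods valid in $L^p$, but for $p=2$ the Fourier approach you chose is the cleaner one.)
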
 
For a proof, see for instance \cite{DiNezza-Palatucci-Valdinoci} and \cite{BBM}.

Moreover, it is shown in \cite{BBM} the following stronger statement.
\begin{prop}\label{sequence.u}
Given a sequence $s_k\to 1$ and $\{u_k\}_{k\in\N}\subset L^2(\R^n)$ such that
$$
\sup_{k\in \N} \|u_k\|_2 <\infty\quad \text{and}\quad \sup_{k\in\N} |u_k|_{s_k} <\infty,
$$
then there exists a function $u\in H^1(\R^n)$ such that (up to a subsequence), 
$$
u_k\to u \text{ strongly in } L^2_{loc}(\R^n) \quad \text{and}\quad \|\nabla u\|_2^2\le \liminf_{k\to\infty} |u_k|_{s_k}^2.
$$
\end{prop}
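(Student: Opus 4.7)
The plan is to transfer the problem to the Fourier side. With the normalization $C(n,s)$ of Section 4, Plancherel's identity will give
\[
|u|_s^2 = c_n \int_{\R^n} |\xi|^{2s}|\widehat u(\xi)|^2\, d\xi \quad\text{and}\quad \|\nabla u\|_2^2 = c_n \int_{\R^n} |\xi|^2|\widehat u(\xi)|^2\, d\xi,
\]
with one and the same dimensional constant $c_n>0$; the equality of these two constants is precisely what makes the convergence in Proposition \ref{fixed.u} hold without spurious factors. After this bookkeeping, the proof should reduce to manipulations based on the pointwise convergence $|\xi|^{2s_k}\to|\xi|^2$.

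For the $L^2_{loc}$ compactness, the key observation is that once $s_k\geq \frac12$ (which holds for $k$ large) one has the elementary bound $|\xi|\leq 1+|\xi|^{2s_k}$ pointwise on $\R^n$ (trivially on $\{|\xi|\leq 1\}$, and because $|\xi|\leq|\xi|^{2s_k}$ on $\{|\xi|\geq 1\}$). Plancherel then yields $\|u_k\|_{H^{1/2}(\R^n)}^2\leq C\bigl(\|u_k\|_2^2+|u_k|_{s_k}^2\bigr)\leq C'$, uniformly in $k$. The compact embedding $H^{1/2}(\R^n)\hookrightarrow\hookrightarrow L^2_{loc}(\R^n)$ and Banach-Alaoglu will produce a subsequence with $u_k\to u$ strongly in $L^2_{loc}$ and $u_k\rightharpoonup u$ in $L^2(\R^n)$. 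For the liminf, I would then assume further that $|u_k|_{s_k}^2\to A:=\liminf_k |u_k|_{s_k}^2$, fix $R>0$ and $\ve>0$, and observe that since $|\xi|^{2-2s_k}\to 1$ uniformly on $\{|\xi|\leq R\}$, for $k$ large
\[
\int_{|\xi|\leq R} |\xi|^2 |\widehat{u_k}|^2\, d\xi \leq (1+\ve)\int_{|\xi|\leq R} |\xi|^{2s_k} |\widehat{u_k}|^2\, d\xi \leq (1+\ve) c_n^{-1}|u_k|_{s_k}^2.
\]
Hence $\xi\widehat{u_k}\mathbf{1}_{|\xi|\leq R}$ is bounded in $L^2(\R^n;\R^n)$, and weak $L^2$-convergence $\widehat{u_k}\rightharpoonup\widehat u$ together with uniqueness of weak limits identifies its weak limit as $\xi\widehat u\mathbf{1}_{|\xi|\leq R}$. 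Norm lower-semicontinuity then gives $\int_{|\xi|\leq R}|\xi|^2|\widehat u|^2\, d\xi \leq (1+\ve) c_n^{-1} A$. Letting $R\to\infty$ by monotone convergence and then $\ve\to 0$ delivers both $u\in H^1(\R^n)$ and $\|\nabla u\|_2^2 \leq A = \liminf_k|u_k|_{s_k}^2$.

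The main delicate point will be the bookkeeping of the Fourier constants: one must verify that $|u|_s^2$ and $\|\nabla u\|_2^2$ share the same Plancherel constant $c_n$, which is essentially a restatement of Proposition \ref{fixed.u} applied to a Schwartz function. Everything else is soft: the compact embedding $H^{1/2}\hookrightarrow\hookrightarrow L^2_{loc}$ is standard, and the uniform convergence $|\xi|^{2s_k}\to|\xi|^2$ on compact sets is immediate. If one prefers an approach avoiding Fourier analysis (more in the spirit of \cite{BBM}), one can instead rewrite $|u_k|_{s_k}^2$ as a Bourgain--Brezis--Mironescu functional against the concentrating radial kernel $\rho_k(z)\simeq (1-s_k)|z|^{2-n-2s_k}\mathbf{1}_{|z|\leq 1}$ and invoke the BBM compactness and lower-semicontinuity theorems directly.
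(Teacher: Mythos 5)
The paper does not prove this proposition itself: it defers entirely to \cite{BBM}, where the argument is carried out in physical space, using the characterization of $H^1$ via concentrating radial mollifier kernels $\rho_k$ and a Riesz--Kolmogorov type compactness argument. Your Fourier-analytic proof is therefore a genuinely different, self-contained route, and its overall structure is sound: the elementary bound $|\xi|\le 1+|\xi|^{2s_k}$ (valid once $s_k\ge\tfrac12$) yields a uniform $H^{1/2}(\R^n)$ bound, whence $L^2_{loc}$ precompactness and a weak $L^2$ limit $u$; then weak lower semicontinuity of the $L^2$ norm on the Fourier balls $\{|\xi|\le R\}$, combined with $|\xi|^{2s_k}\to|\xi|^2$, gives the liminf inequality, and letting $R\to\infty$ and $\ve\to 0$ both identifies $u\in H^1(\R^n)$ and closes the estimate.

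Two points need tightening. First, the assertion that $|u|_s^2$ and $\|\nabla u\|_2^2$ share \emph{one and the same} Plancherel constant $c_n$ is not accurate as written: with the normalization $C(n,s)$ used in the paper (the one of \cite{DiNezza-Palatucci-Valdinoci}) one has $|u|_s^2=2\,\|(-\Delta)^{s/2}u\|_2^2$, i.e.\ $|u|_s^2=2\int|\xi|^{2s}|\widehat u|^2\,d\xi$ while $\|\nabla u\|_2^2=\int|\xi|^2|\widehat u|^2\,d\xi$, and depending on the Fourier convention one may also pick up an $s$-dependent factor such as $(2\pi)^{2s}$. None of this breaks the argument, because what you actually need is only that the constant $a_s$ in $|u|_s^2=a_s\int|\xi|^{2s}|\widehat u|^2\,d\xi$ converges, as $s\to 1$, to the constant $b$ in $\|\nabla u\|_2^2=b\int|\xi|^2|\widehat u|^2\,d\xi$; this is precisely the content of Proposition~\ref{fixed.u}, so phrase the bookkeeping in that weaker but correct form. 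Second, $|\xi|^{2-2s_k}\to 1$ does \emph{not} hold uniformly on $\{|\xi|\le R\}$: it fails near $\xi=0$, where $|\xi|^{2-2s_k}$ is small. What you actually use, and what is true, is the one-sided bound $|\xi|^{2}\le(1+\ve)|\xi|^{2s_k}$ for all $|\xi|\le R$ once $k$ is large, which follows from $|\xi|^{2-2s_k}\le 1$ on $\{|\xi|\le 1\}$ together with $|\xi|^{2-2s_k}\le R^{2-2s_k}\to 1$ on $\{1\le|\xi|\le R\}$. With these two corrections the proof is complete; the remaining steps (compactness of $H^{1/2}(\R^n)\hookrightarrow L^2(B_R)$ for each $R$ followed by a diagonal extraction, multiplication of the weakly convergent $\widehat{u_k}$ by the bounded function $\xi\,\chi_{\{|\xi|\le R\}}$, and passage to the limits in $R$ and $\ve$) are all correct.
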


Throughout this section, we will denote by $\hat f_s$ the optimal load for $\Phi_s$, $\hat u_s = u_{\hat f_s}$ the solution to \eqref{main-fr}. Also, denote $\Phi(f)$ as
$$
\Phi(f) = \int_D |\nabla u_f|^2\, dx,
$$
where in this section, $u_f$ will denote the solution to
$$
\begin{cases}
-\Delta u_f = f & \text{in } D\\
u=0 & \text{on }\partial D.
\end{cases}
$$
Finally, denote by $\hat f\in \bar{\mathcal R}_\beta$ the solution to the minimization problem
$$
\Phi(\hat f) = \inf_{f\in \bar{\mathcal R}_\beta} \Phi(f).
$$
So the main result in this section is the following:
\begin{thm}\label{sto1}
Under the above notations, $\hat f_s\stackrel{*}{\rightharpoonup}\hat f$ weakly* in $L^\infty$ as $s\to 1$. Moreover we also obtain that
$$
\Phi_s(\hat f_s)\to \Phi(\hat f)\quad\text{and}\quad \hat u_s\to \hat u\text{ strongly in } L^2(D),
$$
as $s\to 1$.
\end{thm}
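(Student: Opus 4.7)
The plan is a $\Gamma$-convergence style argument built on the compactness of Proposition \ref{sequence.u} and the pointwise convergence of Proposition \ref{fixed.u}, together with the variational characterization of $\hat f_s$. Fix an arbitrary sequence $s_k\nearrow 1$. Since $\bar{\mathcal R}_\beta$ is weak* sequentially compact in $L^\infty(D)$, along a subsequence one has $\hat f_{s_k}\stackrel{*}{\rightharpoonup}\tilde f$ for some $\tilde f\in\bar{\mathcal R}_\beta$. Testing the equation for $\hat u_{s_k}$ against itself and comparing, by minimality, with a fixed reference load $g\in\bar{\mathcal R}_\beta$, one produces a uniform bound $\sup_k\bigl(|\hat u_{s_k}|_{s_k}^2+\|\hat u_{s_k}\|_2^2\bigr)<\infty$ (the Poincar\'e constant needed here is uniform in $s$ for $s$ near $1$). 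Proposition \ref{sequence.u} then yields, along a further subsequence, $\hat u_{s_k}\to u^*$ strongly in $L^2_{\mathrm{loc}}(\R^n)$ with $u^*\in H^1(\R^n)$; since $\hat u_{s_k}=0$ in $D^c$ one gets $u^*\in H^1_0(D)$ and, in particular, strong convergence in $L^2(D)$.

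The crucial step is to identify $u^*=u_{\tilde f}$, i.e.\ $u^*$ solves $-\Delta u^*=\tilde f$ classically in $D$. The plan is to pass to the limit in the weak formulation, rewritten via symmetry of the bilinear form as
$$
\int_D \hat u_{s_k}\,(-\Delta)^{s_k} v\, dx = \int_D \hat f_{s_k}\, v\, dx,\qquad v\in C_c^\infty(D).
$$
The right-hand side converges to $\int_D\tilde f\,v\,dx$ by weak* convergence of $\hat f_{s_k}$. For the left-hand side, $(-\Delta)^{s_k}v$ is uniformly $L^\infty$-bounded with integrable tails and converges (pointwise and in $L^2$) to $-\Delta v$ by Proposition \ref{fixed.u}, so combined with the strong $L^2(D)$ convergence of $\hat u_{s_k}$ one gets $\int_D\hat u_{s_k}(-\Delta)^{s_k}v\,dx\to\int_D u^*(-\Delta v)\,dx=\int_D\nabla u^*\cdot\nabla v\,dx$. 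Thus $u^*=u_{\tilde f}$ and $\Phi(\tilde f)=\|\nabla u^*\|_2^2$.

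The $\Gamma$-limit scheme now closes the proof. Proposition \ref{sequence.u} gives the liminf bound $\Phi(\tilde f)\le \liminf_k|\hat u_{s_k}|_{s_k}^2=\liminf_k\Phi_{s_k}(\hat f_{s_k})$. For the matching limsup bound I use minimality $\Phi_{s_k}(\hat f_{s_k})\le \Phi_{s_k}(\hat f)$, which reduces the task to an auxiliary convergence $\Phi_{s_k}(g)\to\Phi(g)$ for every fixed $g\in L^\infty(D)$. This auxiliary statement is proved by running the very same compactness and identification argument on the fixed load $g$ — yielding $u_{g,s_k}\to u_g$ strongly in $L^2(D)$ — and then exploiting the weak-formulation energy identities for $|u_{g,s_k}|_{s_k}^2$ and $\|\nabla u_g\|_2^2$, which after the $L^2$ convergence reduce the energy limit to the fact that $\int g\,u_{g,s_k}\,dx\to\int g\,u_g\,dx$. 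Applying this at $g=\hat f$ gives $\limsup_k\Phi_{s_k}(\hat f_{s_k})\le\Phi(\hat f)$. Combining with the liminf bound and uniqueness of the classical minimizer forces $\tilde f=\hat f$, $\Phi_{s_k}(\hat f_{s_k})\to\Phi(\hat f)$, and $\hat u_{s_k}\to \hat u$ strongly in $L^2(D)$. Since the limit is independent of the extracted subsequence, the full families converge.

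The main technical obstacle I anticipate is the identification step: making the passage to the limit in $\int_D\hat u_{s_k}(-\Delta)^{s_k}v\,dx$ precise requires uniform-in-$s_k$ control on the tails of $(-\Delta)^{s_k}v$ so that the convergence of Proposition \ref{fixed.u} combines cleanly with the strong $L^2(D)$ convergence of $\hat u_{s_k}$. The uniform Poincar\'e inequality in $s$ used in the initial energy bound is a smaller but related technical point.
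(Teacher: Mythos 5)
Your proof follows the same overall strategy as the paper: extract a weak* limit $\tilde f$ of $\hat f_{s_k}$, use energy bounds plus Proposition~\ref{sequence.u} to get compactness of the states, identify the limit state as $u_{\tilde f}$, and close with a $\liminf$/$\limsup$ $\Gamma$-convergence argument using the minimality of $\hat f_{s_k}$ and the auxiliary convergence $\Phi_{s_k}(g)\to\Phi(g)$ for a fixed load. Where the two genuinely diverge is in the identification step, which in the paper is packaged as Theorem~\ref{key.thm}. The paper proves this by showing that the Dirichlet energy functionals $F_s(v)=\tfrac12|v|_s^2-\int_\Omega f_s v$ $\Gamma$-converge to $F(v)=\tfrac12\|\nabla v\|_2^2-\int_\Omega f v$ and invoking the abstract convergence-of-minimizers Theorem~\ref{thm.gamma}; you instead pass to the limit directly in the symmetrized weak formulation $\int_D \hat u_{s_k}\,(-\Delta)^{s_k}v\,dx=\int_D \hat f_{s_k}v\,dx$, using Proposition~\ref{fixed.u} for the test function side. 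Both routes are valid. The paper's is more uniform (it treats the rearrangement minimization and the PDE identification by the same $\Gamma$-tool); yours is more elementary, but requires a little more care in combining the convergence of $(-\Delta)^{s_k}v$ with the strong $L^2$ convergence of $\hat u_{s_k}$. On that point, your flagged ``technical obstacle'' about the tails of $(-\Delta)^{s_k}v$ is actually benign: the integral is only over the bounded set $D$ (since $\hat u_{s_k}$ vanishes in $D^c$), and for fixed $v\in C_c^\infty(D)$ the normalized $(-\Delta)^{s}v$ is uniformly bounded on $D$ and converges uniformly on compacts, so dominated convergence plus the strong $L^2(D)$ convergence of $\hat u_{s_k}$ suffices. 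The uniform-in-$s$ Poincar\'e constant you invoke is likewise needed (implicitly) in the paper's ``trivial estimate'' $|u_s|_s\le\|f_s\|_2$; neither treatment spells it out, but it is standard.
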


For the proof of Theorem \ref{sto1} we need the concept of $\Gamma-$convergece. This concept was introduced by De Giorgi in the 60s and is now a well understood tool to deal with the convergence of minimum problems. For a throughout introduction to the subject, we cite \cite{DalMaso}. Let us recall now the definition of $\Gamma-$convergence and some of its properties.
\begin{definition}
Let $X$ be a metric space and $F_n, F\colon X\to\bar\R$. We say that $F_n$ $\Gamma-$converges to $F$, and is denoted by $F_n\stackrel{\Gamma}{\to} F$, is the following two inequalities hold true
\begin{itemize}
\item ($\liminf-$inequality) For any $x\in X$ and any sequence $\{x_n\}_{n\in\N}\subset X$ such that $x_n\to x$ in $X$, it holds that
$$
F(x)\le \liminf_{n\to\infty} F_n(x_n).
$$
\item ($\limsup-$inequality) For any $x\in X$, there exists a sequence $\{y_n\}_{n\in\N}\subset X$ such that $y_n\to x$ in $X$ and
$$
F(x)\ge \limsup_{n\to\infty} F_n(y_n).
$$
\end{itemize}
\end{definition}

The main feature of the $\Gamma-$convergence is that it implies the convergence of minima. In fact we have the following:
\begin{thm}\label{thm.gamma}
Let $X$ be a metric space and $F_n, F\colon X\to\bar R$ be functions such that $F_n\stackrel{\Gamma}{\to}F$. Moreover, assume that for each $n\in\N$, there exists $x_n\in X$ such that
$$
F_n(x_n)=\inf_X F_n
$$
and that $\{x_n\}_{n\in\N}\subset X$ is precompact. Then
$$
\lim_{n\to\infty} \inf_X F_n = \inf_X F
$$
and every accumulation point of the sequence $\{x_n\}_{n\in\N}$ is a minimum point of $F$.
\end{thm}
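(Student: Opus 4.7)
The plan is to prove the two assertions in parallel, using the two $\Gamma$-convergence inequalities in their natural roles: the $\limsup$-inequality will deliver the upper bound $\limsup_n \inf_X F_n \le \inf_X F$, and the $\liminf$-inequality together with the precompactness assumption will deliver both the matching lower bound and the fact that any accumulation point is a minimizer.

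First, to prove $\limsup_n \inf_X F_n \le \inf_X F$, I would fix an arbitrary $y\in X$ and invoke the $\limsup$-inequality to obtain a recovery sequence $y_n\to y$ with $F(y)\ge \limsup_n F_n(y_n)$. Since $F_n(y_n)\ge \inf_X F_n$ for every $n$, this gives $F(y)\ge \limsup_n \inf_X F_n$. Taking the infimum over $y\in X$ yields the desired upper bound.

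For the matching lower bound, I would argue by subsequences. Let $\{x_{n_k}\}$ be any subsequence of the minimizers; by precompactness there is a further subsequence (not relabeled) with $x_{n_k}\to x$ in $X$. The $\liminf$-inequality applied to this convergent sequence gives
\begin{equation*}
F(x)\le \liminf_{k\to\infty} F_{n_k}(x_{n_k}) = \liminf_{k\to\infty} \inf_X F_{n_k},
\end{equation*}
and since $F(x)\ge \inf_X F$, any accumulation value of the sequence $\{\inf_X F_n\}_{n\in\N}$ is bounded below by $\inf_X F$. Combining this with the upper bound of the previous paragraph forces $\inf_X F_n\to \inf_X F$, and reading the inequality again we obtain $F(x) \le \inf_X F$, so $x$ is a minimum point of $F$.

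I do not expect any real obstacle here: the argument is essentially the standard fundamental theorem of $\Gamma$-convergence, and the only subtle point is that the lower-bound argument must be run along arbitrary subsequences of $\{x_n\}$ (not just a single convergent one) in order to upgrade the conclusion from $\liminf$ along a subsequence to convergence of the full sequence $\inf_X F_n$. This is handled cleanly by the "every subsequence has a further subsequence converging to the same limit" principle, which is exactly what the precompactness hypothesis supplies.
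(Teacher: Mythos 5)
Your proof is correct. For the record, the paper itself gives no proof of this theorem --- it simply cites Dal Maso's book as a reference --- so there is no authorial argument to compare against; what you wrote is the standard ``fundamental theorem of $\Gamma$-convergence'' argument that one finds there, with the $\limsup$-inequality supplying $\limsup_n \inf_X F_n \le \inf_X F$ and the $\liminf$-inequality plus compactness supplying the reverse along arbitrary subsequences.

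One small point you glossed over and might wish to make explicit: the $\liminf$-inequality as stated in the paper is for sequences indexed by all of $\N$, whereas you apply it to a convergent subsequence $x_{n_k}\to x$. This is easily justified --- either observe that $\Gamma$-convergence passes to subsequences, or extend $\{x_{n_k}\}$ to a full sequence $z_n\to x$ by setting $z_n=x$ off the subsequence and use $\liminf_n F_n(z_n)\le \liminf_k F_{n_k}(x_{n_k})$ --- but it is worth a sentence since the statement you invoke does not, as written, apply directly to a subsequence. Your ``every subsequence has a further convergent sub-subsequence'' scheme for upgrading from accumulation values to full convergence of $\inf_X F_n$ is exactly right and is the cleanest way to conclude.
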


The proof of Theorem \ref{thm.gamma} is easy and can be found in \cite{DalMaso}.

The following result is key in the proof of Theorem \ref{sto1}.
\begin{thm}\label{key.thm}
Given $0<s<1$, let $f_s\in L^2(\Omega)$ be such that $f_s\rightharpoonup f$ weakly in $L^2(\Omega)$ and let $u_s\in H^s_0(\Omega)$ and $u\in H^1_0(\Omega)$ be the solutions to 
$$
(-\Delta)^s u_s = f_s \quad \text{in }\Omega,\quad u_s=0 \quad \text{in }\R^n\setminus \Omega
$$
and
$$
-\Delta u = f \quad \text{in }\Omega,\quad u = 0 \quad \text{on }\partial \Omega
$$
respectively.

Then $u_s\to u$ strongly in $L^2(\Omega)$. Moreover,
$$
|u_s|_s \to \|\nabla u\|_2.
$$
\end{thm}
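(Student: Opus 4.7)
The plan is to combine the BBM compactness in Proposition \ref{sequence.u} with a duality-based identification of the limit, and then read off the seminorm convergence from the energy identity. First, taking $v=u_s$ in the weak formulation \eqref{partint-fr} gives
$$
|u_s|_s^2 = \int_\Omega f_s u_s\,dx \le \|f_s\|_2\|u_s\|_2.
$$
Since $f_s\rightharpoonup f$ in $L^2$, the sequence $\|f_s\|_2$ is bounded, and a uniform-in-$s$ Poincar\'e inequality $\|u\|_2 \le C|u|_s$ on $H^s_0(\Omega)$ valid for $s$ close to $1$ (which can be established by a normalization--contradiction argument using Proposition \ref{sequence.u}: otherwise one finds $v_k\in H^{s_k}_0(\Omega)$ with $\|v_k\|_2=1$ and $|v_k|_{s_k}\to 0$, whose $L^2$-limit is a nonzero constant in $H^1_0(\Omega)$, a contradiction) yields the uniform bounds $|u_s|_s\le C$ and $\|u_s\|_2\le C$.

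Next I pick an arbitrary sequence $s_k\to 1$. By Proposition \ref{sequence.u}, up to a subsequence $u_{s_k}\to v$ strongly in $L^2_{loc}(\R^n)$ for some $v\in H^1(\R^n)$; since every $u_{s_k}$ is supported in the bounded set $\overline{\Omega}$, the convergence is in fact strong in $L^2(\R^n)$ and $v\in H^1_0(\Omega)$. To identify $v$, I test the weak formulation against $\varphi\in C_c^\infty(\Omega)$, transferring the operator onto the test function by symmetry:
$$
\int_{\R^n} u_{s_k}\,(-\Delta)^{s_k}\varphi\,dx = \int_\Omega f_{s_k}\varphi\,dx.
$$
The right-hand side tends to $\int_\Omega f\varphi\,dx$ by weak convergence of $f_{s_k}$. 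For the left-hand side, the Fourier symbol $\widehat{(-\Delta)^{s_k}\varphi}(\xi)=|\xi|^{2s_k}\hat\varphi(\xi)$ combined with dominated convergence (applicable since $\hat\varphi$ is Schwartz, with dominating function of order $1+|\xi|^4$) gives $(-\Delta)^{s_k}\varphi\to -\Delta\varphi$ strongly in $L^2(\R^n)$; weak--strong pairing then yields $\int_\Omega v(-\Delta\varphi)\,dx=\int_\Omega f\varphi\,dx$. Hence $-\Delta v=f$ in $\mathcal{D}'(\Omega)$, and uniqueness of the classical Dirichlet problem forces $v=u$. As the limit is independent of the extracted subsequence, the full net satisfies $u_s\to u$ strongly in $L^2(\Omega)$.

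The seminorm convergence is now immediate from weak--strong pairing:
$$
|u_s|_s^2 = \int_\Omega f_s u_s\,dx \;\longrightarrow\; \int_\Omega fu\,dx = \|\nabla u\|_2^2.
$$
The most delicate step I anticipate is the identification of $v$, where a ``weakly convergent'' sequence of nonlocal operators $(-\Delta)^{s_k}$ must be paired against $u_{s_k}$. Everything hinges on the normalizing constant $C(n,s)\sim(1-s)$ being built into the operator, which is exactly what makes the Fourier symbols $|\xi|^{2s_k}$ converge to $|\xi|^2$ in the clean fashion required for dominated convergence; without this normalization the Fourier symbol would degenerate and the whole duality argument would collapse.
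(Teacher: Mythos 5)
Your proof is correct, but it takes a genuinely different route from the paper's. The paper packages the whole argument in the language of $\Gamma$-convergence: it introduces the Dirichlet-type energies $F_s(v)=\tfrac12|v|_s^2-\int_\Omega f_s v$ and $F(v)=\tfrac12\|\nabla v\|_2^2-\int_\Omega f v$, shows $F_s\stackrel{\Gamma}{\to}F$ by combining the pointwise limit of Proposition~\ref{fixed.u} (for the $\limsup$-inequality, with the constant recovery sequence) and the BBM compactness of Proposition~\ref{sequence.u} (for the $\liminf$-inequality), and then invokes Theorem~\ref{thm.gamma} to pass minimizers to the limit. You instead run a direct compactness-plus-identification argument: you still use Proposition~\ref{sequence.u} to extract an $L^2$-strongly convergent subsequence, but you identify the limit by transferring $(-\Delta)^{s_k}$ onto a fixed test function $\varphi\in C_c^\infty(\Omega)$ and showing $(-\Delta)^{s_k}\varphi\to-\Delta\varphi$ strongly in $L^2$ via the Fourier symbol $|\xi|^{2s_k}\to|\xi|^2$ with dominated convergence, then closing with a subsequence-uniqueness argument. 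The two proofs share the BBM compactness as the essential compactness tool, but your identification step (Fourier duality) replaces the paper's use of Proposition~\ref{fixed.u} inside the $\Gamma$-convergence machinery; this is arguably more elementary, though it does rely on the normalized symbol being exactly $|\xi|^{2s}$, which is precisely what $C(n,s)$ is designed to do. You are also more careful than the paper about the uniform-in-$s$ Poincar\'e inequality: the paper calls the bound $|u_s|_s\le\|f_s\|_2$ ``trivial'' without addressing why the Poincar\'e constant does not degenerate as $s\to1$, whereas your normalization-contradiction argument via Proposition~\ref{sequence.u} supplies the missing justification (the limit would be a nonzero constant in $H^1(\R^n)$ supported in $\overline\Omega$, which is impossible). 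The final energy-identity step for $|u_s|_s\to\|\nabla u\|_2$ is the same in both proofs.
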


\begin{proof}
Let $F_s, F\colon L^2(\R^n)\to \bar \R$ given by
$$
F_s(v) := \begin{cases}
\frac12 |v|_s^2 - \int_\Omega f_s v\, dx & \text{if } v\in H^s_0(\Omega),\\
+\infty & \text{otherwise}
\end{cases}
$$
and
$$
F(v) := \begin{cases}
\frac12 \|\nabla v\|_2^2 - \int_\Omega f v\, dx & \text{if } v\in H^1_0(\Omega),\\
+\infty & \text{otherwise}.
\end{cases}
$$

Since $\int_\Omega f_s v_s\, dx \to \int_\Omega fv\, dx$ if $v_s\to v$ strongly in $L^2(\R^n)$, from Propositions \ref{fixed.u} and \ref{sequence.u} we can conclude that $F_s\stackrel{\Gamma}{\to} F$ as $s\to 1$.

Now, observe that 
$$
F_s(u_s) = \inf_{v\in L^2(\R^n)} F_s(v) \quad \text{and} \quad F(u) = \inf_{v\in L^2(\R^n)} F(v).
$$

The trivial estimate $|u_s|_s\le \|f_s\|_2$ imply that, for any $s_k\to 1$, the sequence $\{u_{s_k}\}_{k\in\N}\subset L^2(\R^n)$ is precompact. Then from Theorem \ref{thm.gamma} we obtain that $u_s\to u$ strongly in $L^2(\Omega)$.

Finally, 
$$
\lim |u_s|_s^2 = \lim \int_\Omega f_s u_s\, dx = \int_\Omega fu\, dx = \|\nabla u\|_2^2.
$$
This completes the proof.
\end{proof}

Now we are ready to prove the main result of the section.
\begin{proof}[Proof of Theorem \ref{sto1}]
Let $\hat f_s\in \bar{\mathcal R}_\beta$ be the optimal load for $\Phi_s$. Observe that, for a subsequence, $\hat f_s\stackrel{*}{\rightharpoonup} f$ weakly* in $L^\infty(\Omega)$ for some $f\in \bar{\mathcal R}_\beta$. Moreover, this convergence also holds weakly in $L^2(\Omega)$.

From Theorem \ref{key.thm} we have that $\hat u_s\to u_f$ strongly in $L^2(\Omega)$ and using Theorem \ref{sequence.u} we get
$$
\inf_{\bar{\mathcal R}_\beta} \Phi\le \Phi(f) = \|\nabla u_f\|_s^2\le \liminf |\hat u_s|_s^2 =\liminf \inf_{\bar{\mathcal R}_\beta} \Phi_s.
$$

On the other hand, let $\hat f\in \bar{\mathcal R}_\beta$ be the optimal load for $\Phi$. Then, using the final part of Theorem \ref{key.thm}, we obtain
$$
\limsup \inf_{\bar{\mathcal R}_\beta} \Phi_s\le \lim \Phi_s(\hat f) = \Phi(\hat f) = \inf_{\bar{\mathcal R}_\beta} \Phi.
$$
The proof is complete.
\end{proof}

\section{The normalized fractional obstacle problem}

This section is devoted to the study of the connection between the solutions to the optimal fractional rearrangement problem consider in Section \ref{opt.rear} with solutions of the normalized fractional obstacle problem.

The fractional analogue of the classical obstacle problem has been well known in the literature, however its so called normalized version, i.e., the equation
\begin{equation}\label{norm.obst}
\Delta u=\chi_{\{u>0\}},
\end{equation}
has not been considered. Here we find the corresponding fractional analog of \eqref{norm.obst} and prove that the solution of the fractional rearrangement problem is a solution of the fractional normalized obstacle problem.

Our first result is the following.
\begin{thm}\label{frac.obst}
Let $\hat f\in \bar{\mathcal{R}}_\beta$ be the solution to the optimal fractional rearrangement problem and $\hat u := u_{\hat f}\in H^s_0(D)$ be given by \eqref{main-fr}. Let $\alpha>0$ be the constant given in Theorem \ref{thm.opt}. Then the function $\hat U := \alpha-\hat u$ minimizes the functional
\begin{equation}\label{func-fr}
J(v) = |v|_s^2 + \int_D v^+\, dx
\end{equation}
over the set $H_\alpha = \{v\in H^s_{loc}(\R^n)\colon v-\alpha\in H^s_0(D)\}$. Moreover, $\hat U$ verifies the inequalities
\begin{equation}\label{ineq.obst}
\chi_{\{U>0\}}\le -(-\Delta)^s U\le \chi_{\{U\ge 0\}} \quad \text{in } D
\end{equation}
in the sense of distributions.

Finally, the minimizer of $J$ in $H_\alpha$ is unique and is the unique solution to the inequality \eqref{ineq.obst}.
\end{thm}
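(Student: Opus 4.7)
The plan is to separate the two assertions of Theorem \ref{frac.obst} and link them through convex analysis. For the obstacle inequalities \eqref{ineq.obst}, I would verify them directly from Theorem \ref{thm.opt}: since $\hat u$ solves \eqref{main-fr} with data $\hat f$, one has $-(-\Delta)^s \hat U = \hat f$ in $D$ both distributionally and, by the $L^\infty$ regularity recalled at the end of Section \ref{sec-pre}, pointwise a.e. Theorem \ref{thm.opt} then provides $\hat U\ge 0$ in $D$, $0\le \hat f\le 1$, and the inclusion $\{\hat U>0\}\subset\{\hat f=1\}$, so that the sandwich $\chi_{\{\hat U>0\}}\le \hat f\le \chi_{\{\hat U\ge 0\}}$ holds pointwise a.e. in $D$ and hence distributionally.

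For the variational characterization I would use convexity of $J$ on the affine class $H_\alpha$. Parametrizing any competitor as $v=\hat U+\phi$ with $\phi\in H^s_0(D)$, the bilinear identity behind \eqref{GN-der} gives
\begin{align*}
|\hat U+\phi|_s^2 - |\hat U|_s^2 = 2\iint_{\R^{2n}}\frac{(\hat U(x)-\hat U(y))(\phi(x)-\phi(y))}{|x-y|^{n+2s}}\,dxdy + |\phi|_s^2,
\end{align*}
and the first term can be rewritten through $\langle (-\Delta)^s\hat U,\phi\rangle$ and, by the previous step, replaced by an expression proportional to $-\int_D \hat f\,\phi\,dx$. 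For the non-smooth part I would use the pointwise convex inequality $(\hat U+\phi)^+-\hat U^+\ge g\,\phi$, valid for any measurable selection $g(x)$ in the subdifferential of $t\mapsto t^+$ at $\hat U(x)$, namely $g=1$ on $\{\hat U>0\}$, $g=0$ on $\{\hat U<0\}$, and $g\in[0,1]$ on $\{\hat U=0\}$. The inequality \eqref{ineq.obst} is precisely the statement that the choice $g:=-(-\Delta)^s\hat U$ is such an admissible selection, and with that choice the first-order contributions to $J(\hat U+\phi)-J(\hat U)$ cancel (once the multiplicative constant relating the two sides is taken into account), leaving $J(\hat U+\phi)-J(\hat U)\ge |\phi|_s^2\ge 0$.

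Strict convexity of $|\cdot|_s^2$ on $H^s_0(D)$, which follows from the Poincar\'e inequality \eqref{poincare}, upgrades this to a strict inequality whenever $\phi\not\equiv 0$, giving uniqueness of the minimizer in $H_\alpha$. For the final uniqueness statement about \eqref{ineq.obst}, the same subgradient computation shows that any $U\in H_\alpha$ satisfying \eqref{ineq.obst} minimizes $J$ --- by taking $g=-(-\Delta)^s U$ as the admissible pointwise selection of the subdifferential of $t\mapsto t^+$ at $U$ --- so by uniqueness of the minimizer one concludes $U=\hat U$.

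The main technical point will be the careful bookkeeping of the multiplicative constants arising in \eqref{GN-der} that relate the G\^ateaux derivative of $|\cdot|_s^2$ to the dual pairing with $(-\Delta)^s$, so that the first-order term in the expansion of $J(\hat U+\phi)-J(\hat U)$ vanishes exactly when the selection $g=-(-\Delta)^s\hat U$ is made; once this is settled, what remains is a routine convex minimization argument.
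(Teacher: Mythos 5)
Your proposal is correct in substance and acknowledges (and would have to resolve) the same multiplicative‑constant bookkeeping between $|\cdot|_s^2$, $(-\Delta)^s$ and $\int v^+$ that is also present in the paper's stated $J$; the structure is sound but the route is genuinely different from the paper's.

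The paper proves minimality by introducing the \emph{linear} auxiliary functional $I(v)=|v|_s^2+\int_D\hat f\,v\,dx$. Since $\hat U$ satisfies the (linear) Euler--Lagrange equation of $I$ on $H_\alpha$ (precisely because $-(-\Delta)^s\hat U=\hat f$ in $D$), $\hat U$ minimizes $I$; on the other hand $0\le\hat f\le1$ gives $J\ge I$ pointwise, and $\{\hat U>0\}\subset\{\hat f=1\}$ together with $\hat U\ge0$ in $D$ gives $J(\hat U)=I(\hat U)$, whence $J(v)\ge I(v)\ge I(\hat U)=J(\hat U)$. The inequalities \eqref{ineq.obst} are then read off as the one‑sided Euler--Lagrange conditions of the established minimality. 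You invert this order: you first derive \eqref{ineq.obst} directly from the conclusions of Theorem \ref{thm.opt} ($\hat U\ge 0$, $\{\hat U>0\}\subset\{\hat f=1\}$, $0\le\hat f\le1$, together with $-(-\Delta)^s\hat U=\hat f$ a.e.), then expand $J(\hat U+\phi)-J(\hat U)$ directly and feed in the subgradient inequality for $t\mapsto t^+$ with the selection $g=-(-\Delta)^s\hat U$, whose admissibility is exactly \eqref{ineq.obst}. Both are legitimate convex‑analysis arguments; the paper's auxiliary‑functional trick avoids subdifferentials entirely and is arguably slicker, whereas your expansion makes the first‑order mechanism, and in particular the exact role of the sandwich $\chi_{\{U>0\}}\le -(-\Delta)^sU\le\chi_{\{U\ge0\}}$ as a measurable subgradient selection of $(\cdot)^+$, more transparent, and it yields the final equivalence ``\eqref{ineq.obst} $\Leftrightarrow$ minimizer of $J$'' (hence uniqueness) in one stroke, where the paper argues the same point by a separate contradiction with a directional‑derivative splitting into $\psi^+$ and $\psi^-$ parts. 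One small point worth making explicit in your write‑up: for a general $U$ obeying \eqref{ineq.obst}, the inequalities already force $-(-\Delta)^sU$ to be an $L^\infty(D)$ function (sandwiched between two characteristic functions), so the pointwise selection $g=-(-\Delta)^sU$ is well defined; and strict convexity of $J$ on $H_\alpha$ should be cited via Poincar\'e \eqref{poincare}, as you do.
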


\begin{proof}
Let 
$$
I(v)=|v|_s^2 + \int_D \hat{f}v\, dx
$$
and observe that, since $0\le\hat f\le 1$, for any $v\in H_\alpha$ it follows that $J(v)\ge I(v)$.

Next, observe that $I(\hat U)=J(\hat U)$ and so the set of inequalities
$$
J(v)\geq I(v)\geq I(\hat{U})=J(\hat{U}),\quad \text{for any } v\in H_\alpha
$$
imply the desired result.

Next, observe that the inequalities
$$
\chi_{\{\hat{U}>0\}}\leq -(-\Delta)^s \hat{U}\leq\chi_{\{\hat{U}\geq 0\}}.
$$ 
are the Euler-Lagrange equation for the functional $J$ based on the variation $u_\ve(x)=u(x)+\ve \phi(x)$, with $\phi\in C_c^\infty(D)$.

Now, the uniqueness of minimizer for $J$ is an immediate consequence of the strict convexity of $J$.

Assume that the function $U$ satisfies the inequalities \eqref{ineq.obst}, but the unique minimizer of the convex functional $J$ is the function $V\not=U$.

Since $J$ is strictly convex and $J(V)<J(U)$ by taking $U_\ve=U+\ve(V-U)$ we will obtain
$$
\lim_{\ve\to 0^+}\frac{J(U_\ve)-J(U)}{\ve}<0.
$$ 
Thus for $\psi=V-U$ 
\begin{align*}
0>&\lim_{\ve\to 0^+}\frac{J(U_\ve)-J(U)}{\ve}\\
=&\iint_{\R^{2n}}\frac{(u(x)-u(y))(\psi(x)-\psi(y))}{|x-y|^{n+2s}}dxdy+\int_D \chi_{\{U>0\}}\psi+\chi_{\{U=0\}}\psi^+dx\\
=&\underbrace{\iint_{\R^{2n}}\frac{(u(x)-u(y))(\psi^+(x)-\psi^+(y))}{|x-y|^{n+2s}}dxdy+\int_D \chi_{\{U\geq 0\}}\psi^+ dx}_{\geq 0}\\
&-\Big(\underbrace{\iint_{\R^{2n}}\frac{(u(x)-u(y))(\psi^-(x)-\psi^-(y))}{|x-y|^{n+2s}}dxdy+\int_D \chi_{\{U>0\}}\psi^-dx}_{\leq 0}\Big)\geq 0,
\end{align*}
where the last inequality follows from \eqref{ineq.obst}. This is a contradiction and the result follows.
\end{proof}

\begin{rem}
This result again shows an interesting difference between the classical obstacle problem and the fractional normalized version. Observe that in the positivity set, we still have $-(-\Delta)^s \hat U = 1$, but in the zero set the function $\hat U$ is not $s-$harmonic (even if it is identically zero!). The free boundary condition on $\partial\{\hat U>0\}$ is given by the fact that $(-\Delta)^s \hat U$ is a function bounded by $0$ and $1$ across the free boundary.
\end{rem}

The results in Theorem \ref{frac.obst} are not completely satisfactory, since we don't obtain an equation satisfied by $\hat U$ but the inequalities \eqref{ineq.obst}.

Our last result shows that in fact $\hat U$ is the solution to a fully nonlinear equation.

\begin{thm}\label{thm.final}
Let $\hat U$ be solution of the normalized fractional obstacle problem given by Theorem \ref{frac.obst}. Then $\hat U$ is a solution to
\begin{equation}\label{eq.fbp-fr}
\begin{cases} 
-(-\Delta)^s {U}-\chi_{\{{U}\leq 0\}}\min\{-(-\Delta)^s{U}^+;1\} = \chi_{\{{U}>0\}} ,
  & \mbox{in } D,
 \\
 U = \alpha  & \mbox{in }  D^c. \end{cases}
\end{equation}
Moreover, problem \eqref{eq.fbp-fr} is equivalent to \eqref{ineq.obst}. Finally, $U$ verifies \eqref{eq.fbp-fr} if and only if it is a minimizer of $J$ in $H_\alpha$, where $J$ and $H_\alpha$ are given in Theorem \ref{frac.obst}.
\end{thm}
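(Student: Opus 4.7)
The plan is to reduce the theorem to proving the equivalence \eqref{eq.fbp-fr} $\iff$ \eqref{ineq.obst}. Once this is established, Theorem \ref{frac.obst} (which already identifies \eqref{ineq.obst} with being the unique minimizer of $J$ in $H_\alpha$) delivers both the final equivalence stated in the theorem and the fact that $\hat U$ solves \eqref{eq.fbp-fr}, since $\hat U$ already solves \eqref{ineq.obst}.

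For the implication \eqref{ineq.obst} $\Rightarrow$ \eqref{eq.fbp-fr}: any $U$ satisfying \eqref{ineq.obst} is non-negative on $\R^n$. Indeed, $U$ is the unique minimizer of $J$ and $U^+\in H_\alpha$ (since $U=\alpha>0$ on $D^c$), so the pointwise contraction $|a^+-b^+|\le|a-b|$ gives $|U^+|_s\le|U|_s$, strictly unless $U\ge 0$ almost everywhere, while $\int_D (U^+)^+\,dx=\int_D U^+\,dx$; hence $J(U^+)\le J(U)$ with equality forcing $U=U^+$. Assuming $U\ge 0$, on $\{U>0\}$ \eqref{ineq.obst} forces $-(-\Delta)^s U=1$ and the factor $\chi_{\{U\le 0\}}$ kills the nonlinear term, so both sides of \eqref{eq.fbp-fr} equal $1$; on $\{U=0\}$, $U^+=U$ globally and $-(-\Delta)^s U^+=-(-\Delta)^s U\in[0,1]$, so $\min\{-(-\Delta)^s U^+;1\}=-(-\Delta)^s U$ and \eqref{eq.fbp-fr} reduces to $0=0$.

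For the converse, the crux is to show $U\ge 0$ for any $U\in H_\alpha$ solving \eqref{eq.fbp-fr}, after which \eqref{ineq.obst} follows by a case split: the identity $-(-\Delta)^s U=1$ on $\{U>0\}$ is immediate, while on $\{U=0\}$ the equation gives $-(-\Delta)^s U\le 1$ and the pointwise formula $(-\Delta)^s U(x)=-C(n,s)\int U(y)|x-y|^{-n-2s}\,dy\le 0$ at a zero of a non-negative function gives $-(-\Delta)^s U\ge 0$. To prove $U\ge 0$, test \eqref{eq.fbp-fr} against $\phi=U^-\in H^s_0(D)$ (which vanishes on $D^c$ since $U=\alpha>0$ there). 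Expanding $U=U^+-U^-$ and using $\langle(-\Delta)^s U^-,U^-\rangle=\tfrac12|U^-|_s^2$ yields
\[
-\langle(-\Delta)^s U,U^-\rangle=-\langle(-\Delta)^s U^+,U^-\rangle+\tfrac12|U^-|_s^2,
\]
where $-\langle(-\Delta)^s U^+,U^-\rangle\ge 0$ by the pointwise sign identity $(U^+(x)-U^+(y))(U^-(x)-U^-(y))\le 0$ (verified by a case split on the signs of $U(x),U(y)$). The right hand side of \eqref{eq.fbp-fr} paired against $U^-$ is supported on $\{U<0\}$, where $U^+(x)=0$ but $-(-\Delta)^s U^+(x)>0$ thanks to the boundary datum $U^+=\alpha$ on $D^c$. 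Equating and rearranging then gives
\[
\tfrac12|U^-|_s^2=\int_{\{U<0\}}\bigl[\min\{-(-\Delta)^s U^+;1\}-(-(-\Delta)^s U^+)\bigr]\,U^-\,dx\le 0,
\]
since the bracket is $\le 0$ (as $\min\{a;1\}\le a$ for $a\ge 0$) and $U^-\ge 0$. Therefore $|U^-|_s=0$ and $U^-\equiv 0$.

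The main obstacle I anticipate is precisely this testing step: one must justify interpreting $\langle(-\Delta)^s U^+,U^-\rangle$ as a pointwise integral on $\{U<0\}$, which requires some regularity of $U^+$. This should follow from $(-\Delta)^s U\in L^\infty(D)$ (as imposed by the right hand side of \eqref{eq.fbp-fr}) combined with the decomposition $(-\Delta)^s U^+=(-\Delta)^s U+(-\Delta)^s U^-$ and the bilinear Hilbert-space formulation recalled in Section \ref{sec-pre}; however, the non-local tail of $U^+$, which is bounded below by the constant $\alpha$ outside $D$, requires careful handling in the integration.
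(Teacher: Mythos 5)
Your proposal is correct and its skeleton is close to the paper's: the central step is testing the weak form of \eqref{eq.fbp-fr} against $\phi=U^-$, using the sign of $(U^+(x)-U^+(y))(U^-(x)-U^-(y))$, and extracting $|U^-|_s=0$; this is essentially the paper's Claim~1. Where you genuinely diverge is in proving that solutions of \eqref{ineq.obst} are non-negative: the paper does this directly by testing with $(U-\beta)^-$ for $\beta<0$ and splitting the double integral into four pieces, whereas you invoke the already-proved equivalence ``\eqref{ineq.obst} $\Leftrightarrow$ minimizer of $J$'' from Theorem~\ref{frac.obst} and then observe that $U^+$ is an admissible competitor with $J(U^+)\le J(U)$, strict unless $U\ge 0$ a.e.\ (the strictness coming from the pointwise contraction $|a^+-b^+|\le|a-b|$ combined with the boundary datum $U=\alpha>0$ on $D^c$). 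Your route is arguably cleaner because it recycles Theorem~\ref{frac.obst} instead of re-running a testing argument, at the cost of relying on the minimization characterization rather than on the PDE inequalities alone; both are valid, and the conclusion is the same.

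Two small remarks. First, you should make explicit that a solution of \eqref{eq.fbp-fr} is required to lie in $H^s_{sub}(D)$ (as the paper stipulates after Corollary~\ref{cor-ms}), since that is what guarantees, via Lemma~\ref{lem-ms}, that $(-\Delta)^s U^+$ is a non-positive Radon measure and the nonlinear term $\min\{-(-\Delta)^sU^+;1\}$ is a well-defined $L^\infty$ function. Second, you are right to flag the passage from the bilinear-form pairing $\langle(-\Delta)^sU^+,U^-\rangle$ to a pointwise integral over $\{U<0\}$ as the delicate spot; the paper performs the same identification silently (in the step from \eqref{equation} to \eqref{third}), so your level of rigor is no worse, and your observation that $(-\Delta)^sU=(-\Delta)^sU^+-(-\Delta)^sU^-$ with $(-\Delta)^sU\in L^\infty(D)$ is the right tool to justify it.
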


Before we start with the proof, let us observe that for $u\in H^s(\R^n)$,
\begin{equation}\label{pm-ineq}
|u^\pm|_s \leq |u|_s
\end{equation}
and hence $(-\Delta)^s u^\pm\in H^{-s}(\R^n)$. On the other hand $(-\Delta)^s u^+$ is a distribution 
and the expression 
$$
\min\{-(-\Delta)^s u^+;1)=-\max\{(-\Delta)^s u^+,-1\} = 1 - ((-\Delta)^su^+ + 1)^+
$$
makes in general no sense, unless $(-\Delta)^s u^+$ is a signed measure in $D$. Let us further observe that since
$$
\chi_{\{u\le 0\}}(-\Delta)^s u^+ \le 0,
$$
we need to search for solutions of \eqref{eq.fbp-fr} only among functions $u$, such that $(-\Delta)^s u \le 0$ in $D$. This leads us to the introduction of fractional subharmonic functions in $D$, which form a convex subset of $H^s(D)$
\begin{equation}\label{defeq-sub}
H^s_{sub}(D)=\left\{u\in H^s_{loc}(\R^n)\colon (-\Delta)^s u\le 0 \text{ in } D\right\}.
\end{equation}
Here the inequality $(-\Delta)^s u\le 0$ should be understood in the sense of distributions.

The following lemma is essential for the equation \eqref{eq.fbp-fr} to make sense.
\begin{lem}\label{lem-ms}
Let $u\in H^s_{sub}(D)$. Then $u^+\in H^s_{sub}(D)$.
\end{lem}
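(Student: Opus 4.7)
The plan is to establish the fractional Kato-type inequality: if $u\in H^s_{sub}(D)$, then $(-\Delta)^s u^+\le 0$ in $D$ in the distributional sense. The strategy is to verify a pointwise version of the Kato inequality for smooth functions, and then recover the distributional statement via mollification and a limit passage.

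\textbf{Pointwise Kato for smooth functions.} For a sufficiently smooth function $v$ with adequate decay at infinity, I claim the pointwise Kato inequality
$$
(-\Delta)^s v^+(x)\le \chi_{\{v(x)>0\}}(-\Delta)^s v(x)
$$
holds. Indeed, when $v(x)>0$, since $v^+(y)\ge v(y)$ for every $y$, one has $v^+(x)-v^+(y)\le v(x)-v(y)$, and integrating against the kernel $|x-y|^{-n-2s}$ yields $(-\Delta)^s v^+(x)\le (-\Delta)^s v(x)$. When $v(x)\le 0$, we have $v^+(x)=0$ and $v^+(y)\ge 0$, so
$$
(-\Delta)^s v^+(x)=-\int_{\R^n}\frac{v^+(y)}{|x-y|^{n+2s}}\, dy\le 0
$$
directly. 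In both cases the right-hand side is non-positive under the hypothesis $(-\Delta)^s v\le 0$.

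\textbf{Regularization and passage to the limit.} Let $\rho_\ve$ be a standard mollifier supported in $B_\ve(0)$, and set $u_\ve:=u*\rho_\ve\in C^\infty(\R^n)$. Since $(-\Delta)^s u\le 0$ on $D$ as a distribution, Schwartz's theorem identifies it with a non-positive Radon measure on $D$. Because $(-\Delta)^s$ commutes with convolution,
$$
(-\Delta)^s u_\ve=((-\Delta)^s u)*\rho_\ve\le 0\qquad\text{pointwise on }D_\ve:=\{x\in D:\operatorname{dist}(x,\partial D)>\ve\}.
$$
The pointwise Kato inequality applied to $u_\ve$ then gives $(-\Delta)^s (u_\ve)^+\le 0$ pointwise on $D_\ve$. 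For any test function $\phi\in C_c^\infty(D)$ with $\phi\ge 0$, I choose $\ve$ small enough that $\supp\phi\subset D_\ve$; writing
$$
0\ge\int_{D_\ve}(-\Delta)^s(u_\ve)^+\,\phi\, dx=\int_{\R^n}(u_\ve)^+\,(-\Delta)^s\phi\, dx,
$$
I pass to the limit $\ve\to 0$ using that $(u_\ve)^+\to u^+$ a.e.\ and in $L^1_{loc}$ (from $|a^+-b^+|\le|a-b|$), together with the decay $(-\Delta)^s\phi(x)=O(|x|^{-n-2s})$ at infinity, to obtain $\langle (-\Delta)^s u^+,\phi\rangle\le 0$. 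Since $\phi\ge 0$ in $C_c^\infty(D)$ was arbitrary, $(-\Delta)^s u^+\le 0$ in $D$, i.e., $u^+\in H^s_{sub}(D)$.

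\textbf{Main obstacle.} The pointwise calculation of Step 1 is elementary; the technical heart lies in the regularization argument. One has to justify (a) the identification of the non-positive distribution $(-\Delta)^s u$ with a Radon measure on $D$, (b) the commutation $(-\Delta)^s(u*\rho_\ve)=((-\Delta)^s u)*\rho_\ve$ in a setting where $u$ is only locally regular, and (c) a growth/integrability estimate on $u$ at infinity ensuring both that $u^+\,(-\Delta)^s\phi$ is integrable and that the limit passage can be performed by dominated convergence. In the framework of the paper the functions of interest are essentially bounded, so (c) is not a genuine obstruction, and (a)–(b) are standard applications of Schwartz's structure theorem and Fourier commutation.
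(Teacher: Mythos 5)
Your argument follows the same high-level strategy as the paper's: verify the pointwise Kato inequality for smooth functions by a direct case analysis, then extend to general $u$ via mollification, $u_\ve=u*\rho_\ve$. The difference lies entirely in how the mollification step and the limit passage are justified. You appeal to Schwartz's structure theorem (to identify the non-positive distribution $(-\Delta)^s u$ with a Radon measure on $D$) and to Fourier commutation to obtain $(-\Delta)^s u_\ve=((-\Delta)^s u)*\rho_\ve\le 0$ on $D_\ve$, then pass to the limit by integrating $(u_\ve)^+$ against $(-\Delta)^s\phi$ with dominated convergence. The paper avoids both the measure-theoretic identification and the Fourier argument: using the symmetry $\rho_\ve(-z)=\rho_\ve(z)$, it proves directly from the Gagliardo bilinear form, via a change of variables, the duality identity $\langle(-\Delta)^s u_\ve,\phi\rangle=\langle(-\Delta)^s u,\phi_\ve\rangle$, which yields the sign of $(-\Delta)^s u_\ve$ at one stroke; the limit is then taken using $u_\ve^+\to u^+$ in $H^s$, which follows from \eqref{pm-ineq} and $L^2$-convergence. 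The paper's route is more self-contained and, as you yourself flag in point (c), bypasses the global integrability concern in the $L^1_{loc}$ limit, since the $H^s$--$H^{-s}$ pairing with $u^+\in H^s(\R^n)$ requires no additional decay estimate on $(-\Delta)^s\phi$; in exchange it works with $u\in H^s(\R^n)$ rather than $H^s_{loc}$, which is the setting in which the lemma is applied. Both versions share the same implicit simplification in applying the smooth-case conclusion to $(u_\ve)^+$, which is only Lipschitz; making this fully rigorous would require approximating $t\mapsto t^+$ by smooth convex functions, a point you may wish to note.
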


\begin{proof}
If $u$ is smooth, then the fractional laplacian has pointwise values. In this case, we simply compute:
\begin{itemize}
\item For $x\in \{u\le 0\}$,
$$
(-\Delta)^s u^+(x) = \text{p.v.} \int_{\R^n} \frac{-u^+(y)}{|x-y|^{n+2s}}\, dy\le 0.
$$
\item For $x\in \{u>0\}$,
\begin{align*}
(-\Delta)^s u^+(x) &= \text{p.v.} \int_{\R^n} \frac{u(x) - u^+(y)}{|x-y|^{n+2s}}\, dy\\
&= \text{p.v.} \int_{\R^n} \frac{u(x)-u(y)}{|x-y|^{n+2s}}\, dy - \text{p.v.} \int_{\R^n} \frac{u^-(y)}{|x-y|^{n+2s}}\, dy\\
&\le 0.
\end{align*}
\end{itemize}

For a general $u\in H^s(\R^n)$, we take $\{\rho_\ve\}_{\ve>0}$ a smooth family of approximations of the identity such that $\rho_\ve(z)=\rho_\ve(-z)$ and define $u_\ve = u\ast\rho_\ve$. 

The result of the lemma will follow from the identity
\begin{equation}\label{key.identity}
\langle (-\Delta)^s u_\ve, \phi\rangle = \langle (-\Delta)^s u, \phi_\ve\rangle, 
\end{equation}
for every $\phi\in C^\infty_c(\R^n)$.

Indeed, assuming \eqref{key.identity}, if $(-\Delta)^s u\le 0$, then $(-\Delta)^s u_\ve\le 0$ for every $\ve>0$. Hence, from the smooth case we conclude that $(-\Delta)^s u_\ve^+\le 0$ and since $u_\ve^+\to u^+$ in $H^s$ the result is proved.

It remains to prove \eqref{key.identity}. For that purpose, it is useful to introduce the notation
$$
D^s u(x,y) = \frac{u(x)-u(y)}{|x-y|^{\frac{n}{2}+s}},
$$
the H\"older quotient of order $s$ of $u$. Then, using that $\rho_\ve(-z)=\rho_\ve(z)$, we observe that
\begin{align*}
\langle (-\Delta)^2 u_\ve, \phi\rangle &= \iint_{\R^{2n}} D^s u_\ve(x,y) D^s\phi(x.y)\, dxdy\\
&= \iint_{\R^{2n}} \int_{\R^n} D^s u(x-z, y-z) \rho_\ve(z) D^s\phi(x,y)\, dz\, dxdy\\
&= \iint_{\R^{2n}} \int_{\R^n} D^s u(x, y) \rho_\ve(z) D^s\phi(x+z,y+z)\, dz\, dxdy\\
&= \iint_{\R^{2n}} \int_{\R^n} D^s u(x, y) \rho_\ve(z) D^s\phi(x-z,y-z)\, dz\, dxdy\\
&= \iint_{\R^{2n}} D^s u(x,y) D^s\phi_\ve(x,y)\, dxdy\\
&= \langle (-\Delta)^2 u, \phi_\ve\rangle.
\end{align*}
The proof is now complete.
\end{proof}

\begin{cor}\label{cor-ms}
If $u\in H^s_{sub}(D) $ then $\min\{-(-\Delta)^s u^+;1\}\in L^\infty(D)$.
\end{cor}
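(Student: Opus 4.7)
The plan is to combine Lemma \ref{lem-ms} with the classical fact that nonnegative distributions are Radon measures, and then interpret the truncation $\min\{\cdot;1\}$ via the Lebesgue decomposition.

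First I would apply Lemma \ref{lem-ms} to $u$, concluding that $u^+\in H^s_{sub}(D)$. By the definition \eqref{defeq-sub}, this means that $(-\Delta)^s u^+\leq 0$ in $D$ in the sense of distributions, or equivalently, that $T:=-(-\Delta)^s u^+$ is a nonnegative distribution on $D$. This sign is really the whole content supplied by the previous results; without it, $(-\Delta)^s u^+$ is an element of $H^{-s}(\R^n)$ with no pointwise meaning.

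Second, I would invoke the classical theorem of L.~Schwartz that every nonnegative distribution on an open set is represented by a locally finite Radon measure. Thus there exists a nonnegative Borel measure $\mu$ on $D$ with $\langle T,\varphi\rangle=\int_D \varphi\,d\mu$ for all $\varphi\in C_c^\infty(D)$. I would then apply the Lebesgue decomposition $\mu = g\,dx+\mu_s$, with $g\in L^1_{loc}(D)$, $g\geq 0$, and $\mu_s\perp dx$, and define the symbol $\min\{-(-\Delta)^s u^+;1\}$ to be $\min\{g,1\}$. Since $0\leq\min\{g,1\}\leq 1$ a.e., this function belongs to $L^\infty(D)$, which is the conclusion.

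The main obstacle is really a definitional one: assigning a rigorous meaning to $\min\{T;1\}$ when $T$ is only known to be a distribution/measure (and may fail to be a function). The nonnegativity guaranteed by Lemma \ref{lem-ms} is precisely what upgrades $T$ from a distribution to a Radon measure, and the Lebesgue decomposition is the canonical way to extract a bounded function from it. This definition is also the one that makes equation \eqref{eq.fbp-fr} in Theorem \ref{thm.final} consistent: any singular part of $-(-\Delta)^s U^+$ must be absorbed by $-(-\Delta)^s U$ on the $\{U\leq 0\}$ side of the equation, so only the absolutely continuous piece participates in the truncation.
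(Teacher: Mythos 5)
Your proof is correct and supplies exactly the argument the paper leaves implicit: Lemma \ref{lem-ms} makes $-(-\Delta)^s u^+$ a nonnegative distribution on $D$, hence (by Schwartz's theorem) a nonnegative Radon measure, and the Lebesgue decomposition then yields an $L^1_{loc}$ density whose truncation by $1$ is the $L^\infty$ function the corollary refers to. This matches the paper's own remark preceding the lemma that the expression ``makes in general no sense, unless $(-\Delta)^s u^+$ is a signed measure in $D$,'' so you have reconstructed the intended reading.
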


Corollary \ref{cor-ms} allows us to formulate the following normalized fractional obstacle problem: 

{\em
For $\alpha>0$ solve 
\begin{equation}\label{fbp-fr-new}
-(-\Delta)^s U - \chi_{\{ U\le 0\}} \min\{-(-\Delta)^s U^+;1\} = \chi_{\{U>0\}} ,
\end{equation}
among functions $U\in H^s(\R^n)$, such that $U=\alpha$ in $D^c$ and $U\in H^s_{sub}(D)$.
}

\noindent The weak formulation of the equation \eqref{fbp-fr-new} is
\begin{align*}
-\frac12 \iint_{\R^{2n}}&\frac{(U(x)-U(y))(\phi(x)-\phi(y))}{|x-y|^{n+2s}}\, dxdy = \\
&\int_D \Big[\chi_{\{{U}\leq 0\}}(x)\min\{-(-\Delta)^s{U}^+;1\}+\chi_{\{{U}>0\}}(x)\Big]\phi(x)\, dx,
\end{align*}
for any $\phi\in H^s_0(D)$.

Now we are ready to prove Theorem \ref{thm.final}.
\begin{proof}[Proof of Theorem \ref{thm.final}] 
We only need to show that problems \eqref{ineq.obst} and \eqref{fbp-fr-new} are equivalent

For convenience let us break down the proof into several claims.
\medskip

{\bf Claim 1.} Assume that $U\in H^s_{loc}(\R^n)$ is a solution of \eqref{fbp-fr-new}. Then $U\geq 0$.

Observe first the following general fact:
$$
(U(x)-U(y))(U^-(x)-U^-(y)) = (U^+(x)-U^+(y))(U^-(x)-U^-(y)) - (U^-(x)-U^-(y))^2.
$$
This simple identity implies that
\begin{equation}\label{identity}
\langle (-\Delta)^s U, U^-\rangle = \langle (-\Delta)^s U^+, U^-\rangle - |U^-|_s^2.
\end{equation}

Now, let us take $\phi=U^-$ as a test function in the weak formulation of \eqref{fbp-fr-new}. Then we obtain
\begin{equation}\label{equation}
\begin{split}
\langle (-\Delta)^s U,U^-\rangle &=-\int_D [\chi_{\{U\leq 0\}}\min(-(-\Delta)^sU^+;1)+\chi_{\{U>0\}} ]U^- \, dx\\
&= -\int_D \min\{-(-\Delta)^sU^+;1\}U^- \, dx\\
&=\int_D\max\{(-\Delta)^2 U^+; -1\} U^-\, dx
\end{split}
\end{equation}
But
\begin{equation}\label{third}
\int_D\max\{(-\Delta)^s U^+; -1\} U^-\, dx\ge \langle(-\Delta)^s U^+, U^-\rangle.
\end{equation}
Therefore, combining \eqref{identity}, \eqref{equation} and \eqref{third}, we arrive at
$$
|U^-|_s^2 \le 0,
$$
and so the claim is proved.
\medskip

{\bf Claim 2.} \eqref{fbp-fr-new} implies \eqref{ineq.obst}. 

It is immediate from Claim 1. 
\medskip

{\bf Claim 3.} \eqref{ineq.obst} implies $U\geq 0$.

The argument is similar to the one of Claim 6 in Theorem  \ref{thm.opt}. Let $U\in H^s_{loc}(\R^n)$ be a solution to \eqref{ineq.obst}. Take $\beta<0$ and $\phi=(U-\beta)^-$, so $\omega = \supp \phi \subset\{U<0\}$. Then

\begin{align*}
0=&2\langle (-\Delta)^s U,\phi\rangle=\iint_{\R^{2n}}\frac{(U(x)-U(y))(\phi(x)-\phi(y))}{|x-y|^{n+2s}}dxdy\\
=&\underbrace{\int_\omega\int_{\omega}\frac{(U(x)-U(y))(U(y)-U(x))}{|x-y|^{n+2s}}dxdy}_{\leq 0}+ \underbrace{\int_\omega\int_{\R^\setminus\omega}\frac{(U(x)-U(y))(\beta-U(x))}{|x-y|^{n+2s}}dxdy}_{\leq 0}\\
&+\underbrace{\int_{\R^\setminus\omega}\int_\omega\frac{(U(x)-U(y))(U(y)-\beta)}{|x-y|^{n+2s}}dxdy}_{\leq 0}+ \underbrace{\int_{\R^\setminus\omega}\int_{\R^\setminus\omega}\frac{(U(x)-U(y))(0-0)}{|x-y|^{n+2s}}dxdy}_{= 0}\\
\le& 0.
\end{align*}
Thus, $|\omega| = |\{U<\beta\}| = 0$ for any $\beta<0$.
\medskip

{\bf Claim 4.} \eqref{ineq.obst} implies \eqref{fbp-fr-new}.

Can be verified directly.
\end{proof}

\subsection*{Acknowledgment}
The research of Zhiwei Cheng and Hayk Mikayelyan has been partly supported by the National Science Foundation of China (grant no.1161101064). Juli\'an F. Bonder is supported by by grants UBACyT 20020130100283BA, CONICET PIP 11220150100032CO and ANPCyT PICT 2012-0153. 

This research was done while J. F. Bonder was a visiting Professor at University of Nottingham at Ningbo, China (UNNC). He wants to thank the institution for the support, the atmosphere and the hospitality that make the stay so enjoyable.

\bibliographystyle{plain}
\bibliography{rearr}

\end{document}